\newcommand{\IR}{\ensuremath{\mathbb{R}}}
\newcommand{\IN}{\ensuremath{\mathbb{N}}}
\newcommand{\IP}{\ensuremath{\mathbb{P}}}
\newcommand{\IE}{\ensuremath{\mathbb{E}}}
\newcommand{\IS}{\ensuremath{\mathbb{S}}}
\newcommand{\dint}{{\rm d}}
\newcommand{\norm}[1]{\left\Vert#1\right\Vert}
\newcommand{\scalar}[2]{\left\langle#1,#2\right\rangle}
\renewcommand{\rho}{\varrho}
\newcommand{\set}[1]{\left\{#1\right\}}
\newcommand{\abs}[1]{\left|#1\right|}
\newcommand{\brackets}[1]{\left(#1\right)}
\renewcommand{\d}{{\rm d}}
\renewcommand{\phi}{{\varphi}}
\newcommand{\dk}[1]{#1} 
\newcommand{\diag}{\mathop{\mathrm{diag}}}
\DeclareMathOperator\rad{rad}
\newtheorem{thm}{Theorem}
\newtheorem{cor}[thm]{Corollary}
\theoremstyle{plain}
\newtheorem{lemma}[thm]{Lemma}
\newtheorem{prop}[thm]{Proposition}
\theoremstyle{definition}
\newtheorem{rem}[thm]{Remark}
\newtheorem*{ack}{Acknowledgements}
\title[Random sections of ellipsoids and random information]{Random sections of ellipsoids and 
the power of\\
random information} 
\author{Aicke Hinrichs 
\and David Krieg 
\and Erich Novak 
\and Joscha Prochno 
\and Mario Ullrich}
\address[A.~Hinrichs, D. Krieg, M.~Ullrich]{Institut f\"ur Analysis,
Johannes Kepler Universit\"at Linz,
Altenbergerstrasse 69, 4040 Linz, Austria}
\address[E.~Novak]{\dk{Institut f\"ur Mathematik, 
Friedrich Schiller Universit\"at Jena, 
Ernst-Abbe-Platz 2, 07743 Jena, Germany}}
\address[J.~Prochno]{Institut f\"ur Mathematik \& Wissenschaftliches Rechnen, 
Karl-Franzens-Universit\"at Graz, 
Heinrichstrasse 36, 8010 Graz, Austria}
\keywords{Random intersection, random information, $L_2$ approximation, high dimensional convexity, Gaussian random matrix, comparison principles for Gaussian processes, \dk{least squares}}
\subjclass[2010]{Primary: 42B35, 52A23, 65Y20 Secondary: 65D15, 60B20}
\date{\today}
\begin{document}

\begin{abstract}
We study the circumradius of the intersection of an $m$-dimensional ellipsoid~$\mathcal E$
with semi-axes $\sigma_1\geq\dots\geq \sigma_m$ with random subspaces of
codimension $n$, \dk{where $n$ can be much smaller than $m$}.
We find that, under certain assumptions on 
$\sigma$, 
this random radius 
$\mathcal{R}_n=\mathcal{R}_n(\sigma)$ 
is of the same order as the minimal such 
radius $\sigma_{n+1}$ with high probability. In other situations $\mathcal{R}_n$ 
is close to the maximum~$\sigma_1$. The random variable $\mathcal{R}_n$ naturally corresponds to the 
worst-case error of the best algorithm based on random information for 
$L_2$-approximation of functions from a compactly embedded 
Hilbert space $H$ with unit ball $\mathcal E$. 
In particular, $\sigma_k$ is the $k$th largest singular value of the embedding 
$H\hookrightarrow L_2$. In this formulation, one can also consider the 
case $m=\infty$ and we prove that 
random information behaves very differently depending 
on whether $\sigma \in \ell_2$ or not. 
For $\sigma \notin \ell_2$ \dk{we get
	$\IE[\mathcal{R}_n] = \sigma_1$ and} random information is completely useless. 
For $\sigma \in \ell_2$ the expected \dk{radius} 
tends to zero at least 
	at rate $o(1/\sqrt{n})$  as $n\to\infty$. 
	In the important case
 \[
  \sigma_k \asymp k^{-\alpha} \ln^{-\beta}(k+1),
 \]
 where $\alpha > 0$ and $\beta\in\IR$ 
 \dk{(which corresponds to various Sobolev embeddings)},
we prove 
$$
\mathbb E [\mathcal{R}_n(\sigma)] \asymp
  \left\{\begin{array}{cl}
  	\sigma_1 	
        &
        \text{if} \quad \alpha<1/2 \text{\, or \,} \beta\leq\alpha=1/2,
        \vspace*{2mm}
        \\
	  \sigma_{n+1} \, \sqrt{\ln(n+1)}  \quad
        &
        \text{if} \quad \beta>\alpha=1/2,
        \vspace*{2mm}
        \\
       \sigma_{n+1}  
        &
        \text{if} \quad \alpha>1/2.
        \end{array}\right.
$$
In the proofs we use
a comparison result for Gaussian 
processes \`a la Gordon, exponential estimates for sums of chi-squared random variables, 
and estimates for the extreme singular values of (structured) Gaussian random matrices.
\dk{The upper bound is constructive. It is proven for the
worst case error of a least squares estimator.}
\end{abstract}

\maketitle

\newpage

\section{Introduction}


\noindent
We are interested in the circumradius of the intersection of a centered
ellipsoid  $\mathcal E$ in $\IR^m$  with a random subspace $E_n$ of codimension $n$, 
where $n$ can be much smaller than $m$.
While the maximal radius is the length of the largest
semi-axis $\sigma_1$, the minimal radius is the length 
of the $(n+1)$-st largest semi-axis $\sigma_{n+1}$.
But how large is the radius of a typical intersection?
Is it comparable to the minimal or the
maximal radius or does it behave 
completely different?
We prove that the radius of a random intersection satisfies 
$$
   \rad ( \mathcal E \cap E_n) \leq  \frac{c}{\sqrt{n}} 
   \bigg(\sum_{j\geq  n/4 }\sigma_j^2\bigg)^{1/2}
$$
with overwhelming probability, where $c\in(0,\infty)$ is an absolute constant.
For many sequences $\sigma$ of semi-axes the right-hand side is of the same order as $\sigma_{n+1}$.
This means that a typical intersection has radius comparable to the smallest one.
One example are semi-axes  of length $\sigma_j = j^{-\alpha}$ of polynomial decay  $\alpha>1/2$.

If the sequence $\sigma$ decays too slowly, this is no longer true and 
we find that a typical intersection often has radius comparable to the largest one.
Indeed, if the ellipsoid is `fat' in the sense that the semi-axes satisfy $\| \sigma \|_2 \ge c \sqrt{n} \sigma_1$, 
then we show that 
$$
   \rad ( \mathcal E \cap E_n) \geq \sigma_1 / 2
$$
with overwhelming probability, where $c\in(0,\infty)$ is an absolute constant.
An example are semi-axes  of length $\sigma_j = j^{-\alpha}$ of polynomial decay  $\alpha \le 1/2$.
Altogether, we obtain
 \[
\IE [ \rad ( \mathcal E \cap E_n)] \asymp
  \left\{\begin{array}{cl}
  	\sigma_1 	
        &
        \text{if} \quad \alpha \leq 1/2,
        \vspace*{2mm}
        \\
       \sigma_{n+1}  
        &
        \text{if} \quad \alpha>1/2,
        \end{array}\right.
 \]
where $\asymp$ denotes equivalence up to positive constants not depending on $n$ and $m$.

The study of diameters of sections of symmetric convex bodies 
with a lower-dimensional subspace has been initiated  
by Giannopoulos and Milman \cite{GM1998,GM1997} and further advanced 
in the subsequent works of Litvak and Tomczak-Jaegermann \cite{LT2000}, Giannopoulos, 
Milman, and Tsolomitis \cite{GMT2005}, or Litvak, Pajor, and Tomczak-Jaegermann \cite{LPT06}. 
However, as has already been pointed out in \cite{GM1998,GM1997}, one cannot expect these bounds 
to be sharp for the whole class of symmetric convex bodies as is indicated by ellipsoids 
with highly incomparable semi-axes for which the diameter of sections of proportional 
dimension does not concentrate around some value \cite[Example 2.2]{GM1997}. 
Moreover, the focus in these papers was on subspaces of proportional codimension, 
whereas we are \dk{mainly} interested in subspaces with small codimension 
such as $m=n^2$ or $m= 2^n$ \dk{or even $m=\infty$}.

\medskip

Our 
motivation 
\dk{comes from} the theory of in\-for\-ma\-tion-based complexity (IBC).
\dk{In IBC we often want} to approximate the solution of a linear problem based on $n$ pieces of information about the unknown problem instance.
We refer to \cite{NW08,NW10,NW12} for a detailed exposition.
\dk{It is usually assumed} that some kind of oracle is available which grants us this information at our request. We call this oracle $n$ times to get $n$ \dk{well-chosen} pieces of information,
\dk{trying} to obtain \emph{optimal information} about the problem instance. 
Often, however, this model \dk{is too idealistic}. 
There might be no such oracle at our disposal and
the information comes in randomly. 
We simply have to work with the information at hand.
This is in fact a standard assumption in learning theory and uncertainty 
quantification, see \cite{SC08}. 
It may also happen that an oracle is available but we simply do not know what to ask in order to obtain optimal information. In such a case, 
it seems natural to ask random questions. 
Both scenarios suggest the analysis of \emph{random information} 
and the question how it compares to optimal information.
For a survey of some classical results as well as new results
see~\cite{HKNPU19-survey}. Here we study the case of $L_2$-approximation
of vectors or functions from a Hilbert space. 

More precisely, we \dk{consider the problem of recovering} 
$x\in \mathcal E$ 
from the data $N_n( x )\in \IR^n$ 
\dk{which is obtained from an} information mapping $N_n \in \IR^{n \times m}$
and measure the error in the Euclidean norm.
The power of the information mapping is \dk{given} 
by its radius, which is the worst case error of the best
recovery algorithm based on $N_n$,
that is,
$$
  \rad(N_{n},\mathcal E)
 = \adjustlimits\inf_{\phi\colon\IR^n\to \IR^m} 
 \sup_{x\in \mathcal E} \norm{\phi(N_{n}(x))-x}_2.
$$
\dk{For problems of this type, it is known that}
the worst data is the zero data, resulting in
$$
  \rad(N_{n},\mathcal E)
 = 
 \sup_{x \in \mathcal E \cap E_n} \norm{x}_2,
$$
where $E_n$ is the kernel of $N_n$, see \cite{CW04,NW08,TWW88}. 
Thus, \dk{if $N_n$ is a standard Gaussian matrix,} 
we indeed arrive at the same problem as above.
The radius of a random intersection is the worst case error of the best algorithm based
on \dk{Gaussian} random information,
whereas the radius of the minimal intersection is the worst case error of the best algorithm based
on optimal information.
So the geometric questions above translate as follows:
How good is random information?
Is it comparable to the optimal information or is it much worse?
%
\dk{The answers are the same.}
For instance, for polynomial decay $\sigma_j = j^{-\alpha}$, we have
 \[
\IE [ \rad ( N_{n},\mathcal E)] \asymp
  \left\{\begin{array}{cl}
  	\sigma_1 	
        &
        \text{if} \quad \alpha \leq 1/2,
        \vspace*{2mm}
        \\
       \sigma_{n+1}  
        &
        \text{if} \quad \alpha>1/2.
        \end{array}\right.
 \]

As a matter of fact, \dk{the results for the radius of random information} 
even hold when $m=\infty$, where our geometric interpretation fails.
\dk{Namely, for any} $\sigma \notin \ell_2$,
we obtain that 
$\IE [ \rad ( N_{n},\mathcal E)] = \sigma_1$
\dk{and random information is completely useless.}
For $\sigma \in \ell_2$
the expected radius of random information tends to zero 
\dk{with the same polynomial rate as the radius of optimal information.
The proof of this upper bound is constructive.
We present a least squares estimator
based on random information that is almost as good as 
the optimal algorithm based on optimal information.}

\begin{rem} 
	\dk{Using isomorphisms, our results can easily be transferred to any compact embedding $S$
	of a Hilbert space $H$ into a separable $L_2$-space.
	That is, we may also consider the problem of approximation an unknown function $f$
	from the unit ball $\mathcal E$ of $H$ in the $L_2$-norm.
	In this case, optimal information is given by the generalized Fourier coefficients
	\[
	 N_n^*(f)=\big(\langle f, b_i\rangle_2\big)_{i=1\hdots n}
	\]
	where $b_i$ is the $L_2$-normalized eigenfunction belonging to the $i$th largest eigenvalue of the operator $S^*S$.
	The radius of optimal information $\sigma_{n+1}$ is the square-root of the $(n+1)$st largest eigenvalue.
	Random information, on the other hand, is given by
		\[
	N_n(f)=\Big(\sum_{j=1}^\infty g_{ij} \langle f, b_j\rangle_2\Big)_{i=1\hdots n},
	\]
	where the $g_{ij}$ are independent standard Gaussian variables.
	Equivalently, if $\sigma\in\ell_2$, we have
	\[
	N_n(f)=\big(\langle f, h_i\rangle_H\big)_{i=1\hdots n},
	\]
	where the $h_i$ are iid Gaussian fields on $H$ whose
	correlation operator $C\colon H\to H$ is defined by $C b_j=\sigma_j^2 b_j$.
 The results are the same.
 In particular, random information is (almost) as good as 
optimal information
 as long as $\sigma\in\ell_2$.
 An important case, which is} often needed in approximation theory 
and complexity studies, 
	are Sobolev embeddings, i.e., $H$ is a Sobolev space of functions 
	that are defined 
	on a bounded domain in $\IR^d$. 
	It is well known that then the singular values behave as 
$ 
  \sigma_k \asymp k^{-\alpha} \ln^{-\beta}(k+1) 
$,  
where $\alpha$ and $\beta$ depend on the smoothness and the dimension $d$
and the condition $\sigma\in\ell_2$ means that the functions in $H$ are continuous,
see also~\cite{CS1990}.
\end{rem}

\begin{rem}
The phenomenon, that the results very much depend on whether $\sigma$ is 
square summable or not, is known from a related problem
that was studied earlier in several papers. 
There $\mathcal{E}$ is the unit ball of a reproducing kernel Hilbert space $H$.
\dk{That is}, $H\subseteq L_2(D)$ consists of functions 
on a common domain $D$ and
function evaluation $f \mapsto f(x)$ is 
a continuous functional on $H$ for every $x\in D$. 
Again, the optimal linear information 
$N_n$ for the $L_2$-approximation problem is given by the 
\dk{generalized Fourier coefficients}
and has radius $\sigma_{n+1}$. 
This information might be \dk{hard to get}
and hence one might allow only \dk{function evaluations, i.e.,} information of the form 
\[
N_n (f) = \big( f(x_1), \dots , f(x_n)\big)\,, \qquad x_i\in D.
\]
The goal is to relate the power of 
function evaluations
to the power of all continuous 
linear functionals.
Ideally one would like to
prove that their power is roughly the same.
Unfortunately, in general
this is \emph{not} true.
In the case $\sigma \notin \ell_2$
the convergence of optimal algorithms 
that may only use function values
can be arbitrarily slow 
\cite{HNV08}.
The situation is much better if we assume that $\sigma \in \ell_2$.
It was \dk{already} shown in 
\cite{WW01} 
and
\cite{KWW09}
that function values \dk{cannot be much worse than} general 
linear information \dk{in this case}.
We refer to \cite[Chapter 26]{NW12} for a presentation of these results. 
\dk{Indeed, based on the proof technique of the present paper,
it was recently proven in \cite{KU19} 
that the polynomial order of convergence for function values and general
linear information is the same for all $\sigma\in\ell_2$.}
\end{rem} 

The rest of the paper is organized as follows. 
In Section~\ref{sec:problem and results}
we discuss the relation between the geometric problem
and the IBC problem in more detail. 
We give general upper bounds (Theorem \ref{thm:upper bound random section A} \dk{and \ref{thm:secondUB}})
and lower bounds (Theorem \ref{thm:lower B})
for the radius of random information in terms of the sequence $\sigma$
which hold with high probability.
We derive the $\ell_2$-dichotomy discussed above (Corollary \ref{cor:l2 not l2}) and
apply the general theorems to sequences of polynomial
decay (Corollary \ref{cor:polynomial}) and 
exponential decay (Corollary \ref{cor:exponential}).
The proofs 
are contained in Section \ref{sec:proofs}.
We add a final section about alternative approaches.
We show an upper bound via the \dk{lower $M^\ast$-estimate}
and an elementary lower bound. 
\dk{These bounds are slightly weaker, 
but give a better insight into the geometric aspect of the problem.}

\section{Problem and results}
\label{sec:problem and results}

\noindent
\dk{We consider the ellipsoid 
$$
 \mathcal E_\sigma = \left\{ x \in \ell_2 \colon 
 \sum_{j\in\IN} \left(\frac{x_j}{\sigma_j}\right)^2 \leq 1 \right\}
$$
with semi-axes of lengths $\sigma_1 \ge \sigma_2 \ge \dots \ge 0$.\footnote{For convenience, 
we use the convention that $a/0=\infty$ for all $a>0$ and $0/0=0$.}
This is the unit ball of a Hilbert space, which we denote by $H_\sigma$.
We study the problem of recovering an unknown vector
$ x\in \mathcal E_\sigma$ from $n$ pieces of information,
where we want to guarantee a small error in $\ell_2$.
The information about $ x\in \mathcal E_\sigma$
is given by the outcome $L_1(x),\hdots,L_n(x)$ 
of $n$ linear functionals.\footnote{Note that we also 
consider unbounded functionals,
meaning that $L_i(x)\in\IR\cup\{{\rm NaN}\}.$}
The mapping $N_n=(L_1,\hdots,L_n)$ is called the information mapping.
A recovery algorithm is a mapping $A_n:H_\sigma\to\ell_2$ of the form $A_n=\phi\circ N_n$, where 
$\phi$ maps $N_n(x)$ back to $\ell_2$.
The worst case error of the algorithm is given by
\[
 e(A_n)=\sup_{x\in \mathcal E_\sigma} \norm{A_n(x)-x}_2.
\]
The quality of the information mapping is measured by its radius, 
which is the worst case error of the best
recovery algorithm based on the information $N_n$,
i.e.,
\[
 \rad(N_n,\mathcal E_\sigma) :=\inf_{\phi\colon {\rm im}(N_n)\to \ell_2} e(\phi\circ N_n).
\]
Note that this is a linear problem over Hilbert spaces as described in \cite[Section~4.2.3]{NW08}.
In particular, we have the relation
\[
\rad(N_n,\mathcal E_\sigma)
\,\ge\, \sup\left\{ \norm{x}_2 \colon x \in \mathcal E_\sigma \text{ with } N_n(x)=0 \right\}
\]
with equality for all bounded information mappings $N_n$.
We refer to \cite{CW04,NW08,TWW88}. 
It is easy to see that \emph{optimal information} is given by the mapping
\[
 N_n^*:H_\sigma \to \IR^n, \quad N_n^*(x)=(x_1,\hdots,x_n),
\] 
which satisfies
\[
 \rad(N_n^*,\mathcal E_\sigma)
 = \inf_{N_n \text{ linear}} \rad(N_n,\mathcal E_\sigma) = \sigma_{n+1}.
\]
We want to compare this to the radius of \emph{random information},
which is given by a random matrix $G_n\in\IR^{n\times\infty}$
with independent standard Gaussian entries.
That is, we study the random variable
\[
 \mathcal R_n(\sigma) = \rad(G_n,\mathcal E_\sigma).
\]
Clearly, we always have $\mathcal R_n(\sigma) \in [\sigma_{n+1},\sigma_1]$.
There are two alternative interpretations of the quantity $\mathcal R_n(\sigma)$
if the sequence $\sigma$ is finite in the sense that $\sigma_j=0$ for all $j>m$.}

\dk{%
\medskip
\noindent\textbf{Variant 1.}
The quantity $\mathcal R_n(\sigma)$ is the circumradius of the $(m-n)$-dimensional ellipsoid
that is obtained by slicing the $m$-dimensional ellipsoid $\mathcal E_\sigma$ with a
subspace $E_n$ that is uniformly distributed on the
Grassmannian manifold $\mathbb{G}_{m,m-n}$ of $n$-codimen\-sional subspaces in $\IR^m$
(equipped with the Haar probability measure).
That is,
$$ 
  \mathcal R_n(\sigma) =
  \rad\brackets{\mathcal E_\sigma\cap E_n} 
  = \sup \big\{ \|x\|_2 \colon x \in \mathcal E_\sigma\cap E_n \big\}.
$$
This easily follows from the fact that the kernel of the matrix $G_n$ 
(when restricted to $\IR^m$) is uniformly distributed on the Grassmannian.

\medskip
\noindent\textbf{Variant 2.}
Since the radius of information is invariant under a (component-wise) scaling of the information mapping,
we also have
\[
 \mathcal R_n(\sigma) = \rad(U_n,\mathcal E_\sigma),
\]
where $U_n$ is obtained from $G_n$ by erasing all but the first $m$ columns
and then normalizing the rows in $\ell_2$.
That is, in the finite-dimensional case, we may just as well study
the quality of the information given by $n$ coordinates
in random directions that are independent and uniformly distributed on the sphere $\mathbb S^{m-1}$.%
}

\medskip

\dk{We want to give upper and lower bounds for $\mathcal R_n(\sigma)$ which hold with high probability.
Clearly, upper bounds are stronger if they are proven for the error $e(A_n)$ 
of a concrete algorithm $A_n=\phi\circ G_n$.
Here, we consider a least squares estimator.
We define $\psi$ as the restriction of $G_n$ to $\IR^k$,
where $k\le n$ is of order $n$ and will be specified later.
Note that we identify $\IR^k$ with the space of all
$x\in\ell_2$ such that $x_j=0$ for all $j> k$.
We then take $\phi=\psi^{+}$ 
where $\psi^+$ is the Moore-Penrose inverse of $\psi\in\IR^{n\times k}$.
This algorithm satisfies, almost surely, that $A_n(x)=x$ for all 
$x\in \IR^k$. Moreover, one may write
\begin{equation}\label{eq:An}
A_n(x) \,=\,  \underset{y\in \IR^k}{\operatorname{arg\, min}}\; \left\|G_n(x-y)\right\|_2, 
\qquad x\in\IR^m.
\end{equation}

Let us now present the results.
We note that it is not an essential assumption that the vector of semi-axes is non-increasing
or even that the semi-axes are aligned with the standard basis of the Euclidean space.
It simply eases the notation.}

\begin{thm}
\label{thm:upper bound random section A}
Let $\sigma\in\ell_2$ be non-increasing \dk{and let $n\in\IN$.
Then the following estimate holds with probability at least $1-2\exp(-n/100)$,}
  $$
   \dk{\mathcal R_n(\sigma) 
   \,\le\, e(A_n)
   \,\le\, \frac{221}{\sqrt{n}} 
   \bigg(\sum_{j\geq\lfloor n/4\rfloor}\sigma_j^2\bigg)^{1/2}.}
  $$
\end{thm}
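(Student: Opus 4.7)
The plan is to analyze the least squares estimator $A_n$ from (\ref{eq:An}) directly by splitting the input $x\in\mathcal E_\sigma$ at a threshold $k$ of order $n$; I will take $k=\lceil n/2\rceil$, which satisfies $k\le n$ and puts $\lfloor n/4\rfloor$ comfortably inside $[1,k+1]$. Partition $G_n=[\psi\mid H]$, where $\psi\in\IR^{n\times k}$ consists of the first $k$ columns and $H$ of the remaining ones, and write correspondingly $x=x'+x''$ with $x'\in\IR^k$ and $x''$ the tail. On the almost-sure event that $\psi$ has full column rank, $\psi^+\psi=I_k$, so
\[
A_n(x)-x \,=\, \psi^+G_nx-x \,=\, \psi^+Hx''-x''.
\]
The two summands live on disjoint coordinate blocks, hence
\[
\|A_n(x)-x\|_2^2 \,=\, \|\psi^+Hx''\|_2^2 + \|x''\|_2^2.
\]

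The second summand is trivial: $x\in\mathcal E_\sigma$ and the monotonicity of $\sigma$ give $\|x''\|_2\le\sigma_{k+1}$. For the first summand, write $x''=D_\tau u$ with $\tau=(\sigma_j)_{j>k}$, $D_\tau$ the corresponding diagonal operator and $\|u\|_2\le 1$; then
\[
\|\psi^+Hx''\|_2 \,\le\, \|\psi^+\|_{\mathrm{op}}\cdot\|HD_\tau\|_{\mathrm{op}}.
\]
Because $\psi$ and $H$ consist of disjoint columns of $G_n$ they are independent, so the two operator norms can be estimated separately with high probability. Gordon's theorem (Davidson--Szarek form) gives
\[
s_{\min}(\psi) \,\ge\, \sqrt{n}-\sqrt{k}-t \qquad \text{with probability}\ge 1-2\exp(-t^2/2).
\]
Chevet's inequality yields $\IE\|HD_\tau\|_{\mathrm{op}}\le\sqrt{n}\,\sigma_{k+1}+\|\tau\|_2$, and Gaussian Lipschitz concentration with constant $\|\tau\|_\infty=\sigma_{k+1}$ upgrades this to
\[
\|HD_\tau\|_{\mathrm{op}} \,\le\, \sqrt{n}\,\sigma_{k+1}+\|\tau\|_2+t\,\sigma_{k+1} \quad \text{with probability}\ge 1-2\exp(-t^2/2).
\]

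Choosing $t$ as an appropriate multiple of $\sqrt{n}$ so that each failure probability sits below $\exp(-n/100)$ and intersecting the two good events, I obtain
\[
\|\psi^+Hx''\|_2 \,\le\, C\,\sigma_{k+1} + \frac{C}{\sqrt{n}}\,\|\tau\|_2
\]
uniformly for $x\in\mathcal E_\sigma$, on an event of probability $\ge 1-2\exp(-n/100)$. Two elementary estimates on $\sigma$ then finish the job: first, $\|\tau\|_2^2=\sum_{j>k}\sigma_j^2\le\sum_{j\ge\lfloor n/4\rfloor}\sigma_j^2$; second, at least $\lceil n/4\rceil$ indices $j\in[\lfloor n/4\rfloor,k+1]$ satisfy $\sigma_j^2\ge\sigma_{k+1}^2$, so $n\,\sigma_{k+1}^2\le C\sum_{j\ge\lfloor n/4\rfloor}\sigma_j^2$. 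Substituting back yields the asserted bound with some absolute constant; pinning it down to $221$ is a matter of careful numerical bookkeeping (choice of $k$, value of $t$, the constants in Gordon and Chevet). The main obstacle is the Chevet-plus-concentration estimate for the non-isotropic Gaussian operator $HD_\tau$, combined with the verification in the case $m=\infty$ that $HD_\tau$ is a well-defined bounded operator from $\ell_2$ to $\IR^n$; this last point is handled by truncation and monotone convergence from the finite-dimensional case.
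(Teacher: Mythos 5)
Your proposal is correct and follows the same overall architecture as the paper's proof, but with two genuine (and in my view attractive) variations. First, in bounding the error of the least squares estimator you exploit the orthogonality of the two blocks, so that $\|A_n(x)-x\|_2^2=\|\psi^+Hx''\|_2^2+\|x''\|_2^2$. The paper's Proposition~\ref{prop:radius vs singular values} instead uses the triangle inequality, arriving at the additive bound $\sigma_{k+1}+s_1(\Sigma_{[n],\IN\setminus[k]})/s_k(G_{n,k})$; the orthogonal version is slightly tighter, though this has no effect on the order. Second, and more substantially, to control the operator norm of the weighted tail block $HD_\tau=\Sigma_{[n],\IN\setminus[k]}$ the paper invokes the recent result of Bandeira and Van Handel (Lemma~\ref{lem:BH}, \cite[Corollary~3.11]{BH16}), which gives sharp tail bounds for the norm of a Gaussian matrix with an arbitrary variance profile. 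You instead observe that the variance profile here is rank one, so the classical Chevet/Gordon comparison inequality already gives $\IE\|HD_\tau\|_{\mathrm{op}}\le\sqrt{n}\,\sigma_{k+1}+\|\tau\|_2$, and the Borell--TIS inequality with Lipschitz constant $\sigma_{k+1}$ upgrades this to a tail bound of the same Gaussian shape. This is a more elementary and more self-contained route than Bandeira--Van Handel, and it is worth noting because the rank-one structure is exactly what keeps the $\log n$ factors (which appear in the general Bandeira--Van Handel bound and in the paper's Lemma~\ref{lem:BH} before the choice $t=\sqrt{2}c\sigma_{k+1}\sqrt n$ absorbs them) out of the final estimate. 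The remaining ingredients --- Davidson--Szarek for $s_{\min}(\psi)$ with $k$ of order $n/2$, the passage to $m=\infty$ by truncation, and the two elementary index manipulations $\|\tau\|_2^2\le\sum_{j\ge\lfloor n/4\rfloor}\sigma_j^2$ and $n\,\sigma_{k+1}^2\le 4\sum_{j\ge\lfloor n/4\rfloor}\sigma_j^2$ --- coincide with the paper's. (One cosmetic difference: you take $k=\lceil n/2\rceil$ while the paper takes $k=\lfloor n/2\rfloor$; for $n\le 69$ the stated probability bound $1-2e^{-n/100}$ is vacuous, and for $n\ge 70$ the difference between the two choices is immaterial.)
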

  
\dk{This estimate turns out to be useful for sequences $\sigma$ of polynomial decay.
For sequences of exponential decay, we add a second upper bound.
It is better suited for such sequences
since the starting index $\lfloor n/4\rfloor$ of the sum in the upper bound is replaced by $n$.}

\dk{\begin{thm}
\label{thm:secondUB}
Let $\sigma\in\ell_2$ be non-increasing.
Then, for all $n\in\IN$ and
$c,s\in[1,\infty)$ we have
  $$
   \IP\left[
   \mathcal R_n(\sigma) \le e(A_n) \le 14sn
   \bigg(\sum_{j>n}\sigma_j^2\bigg)^{1/2}\,
   \right]
   \ge 1- e^{-c^2 n} - \frac{c \sqrt{2e}}{s}.
$$
\end{thm}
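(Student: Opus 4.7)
The plan is to work with the concrete algorithm $A_n$ from \eqref{eq:An} and to choose $k=n$, so that $\psi\in\IR^{n\times n}$ is a square standard Gaussian matrix and $\psi^{+}=\psi^{-1}$ almost surely. For $x\in\mathcal E_\sigma$, decompose $x=x'+x''$, where $x'$ collects the first $n$ coordinates of $x$ and $x''$ the rest, and let $B$ denote the submatrix of $G_n$ formed by the remaining columns. Then $G_n x=\psi x'+Bx''$, hence $A_n(x)=\psi^{-1}G_n(x)=x'+\psi^{-1}Bx''$, and the error vector $A_n(x)-x=\psi^{-1}Bx''-x''$ has two orthogonal components supported on the disjoint index sets $\{1,\dots,n\}$ and $\{n+1,n+2,\dots\}$. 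Consequently
\[
 \|A_n(x)-x\|_2^2 \,=\, \|\psi^{-1}Bx''\|_2^2 + \|x''\|_2^2.
\]

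Write $T=(\sum_{j>n}\sigma_j^2)^{1/2}$ and $\Sigma=\diag(\sigma_{n+1},\sigma_{n+2},\dots)$. Taking the supremum over $\mathcal E_\sigma$, the tail contribution obeys $\sup_x\|x''\|_2=\sigma_{n+1}\le T$, while the head contribution is at most $\|\psi^{-1}\|_{\mathrm{op}}\cdot\|B\Sigma\|_{\mathrm{op}}\le\|\psi^{-1}\|_{\mathrm{op}}\cdot\|B\Sigma\|_F$. Using $\sqrt{a^2+b^2}\le a+b$, this yields
\[
 e(A_n)\,\le\, T + \|\psi^{-1}\|_{\mathrm{op}}\cdot\|B\Sigma\|_F,
\]
and the task reduces to controlling the two independent random quantities on the right with high probability.

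For the Frobenius norm, observe that $\|B\Sigma\|_F^2=\sum_{j>n}\sigma_j^2 Z_j$ with $Z_j=\sum_{i=1}^n g_{ij}^2$ i.i.d.\ $\chi^2_n$ variables. A Bernstein-type deviation bound for weighted sums of sub-exponential variables, using $\sum_j\sigma_j^4\le\sigma_{n+1}^2 T^2\le T^4$ and $\max_j\sigma_j^2\le T^2$, yields for every $c\ge 1$
\[
 \IP\bigl[\,\|B\Sigma\|_F \,>\, K_1 c\sqrt{n}\,T\,\bigr]\,\le\, e^{-c^2 n}
\]
for an absolute constant $K_1$. For the inverse norm of the square Gaussian $\psi$, the sharp Edelman-type estimate for the smallest singular value reads $\IP[\sigma_{\min}(\psi)\le t]\le t\sqrt{2en}$; applied with $t=c/(s\sqrt{n})$ this gives
\[
 \IP\bigl[\,\|\psi^{-1}\|_{\mathrm{op}} \,>\, s\sqrt{n}/c\,\bigr]\,\le\, c\sqrt{2e}/s.
\]

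A union bound then delivers the stated event probability $1-e^{-c^2n}-c\sqrt{2e}/s$, and on its intersection the factors of $c$ cancel in the product $\|\psi^{-1}\|_{\mathrm{op}}\cdot\|B\Sigma\|_F\le K_1 snT$, leaving $e(A_n)\le(K_1+1)\,snT$. I expect the main difficulty to lie entirely in the bookkeeping of numerical constants: one needs a small-singular-value bound with the sharp prefactor $\sqrt{2e}$ so that the probability term reads exactly $c\sqrt{2e}/s$, and a sufficiently sharp weighted $\chi^2$ concentration so that $K_1+1\le 14$. The structural ingredients—head/tail splitting, orthogonality, and the union bound—are routine.
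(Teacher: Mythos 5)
Your proof is correct and structurally parallel to the paper's, but it substitutes a more elementary ingredient at the key step. You and the paper both take $k=n$, decompose $e(A_n)\le \sigma_{n+1} + \|\psi^{-1}\|_{\mathrm{op}}\cdot\|B\Sigma\|_{\mathrm{op}}$ (this is exactly Proposition~\ref{prop:radius vs singular values} specialized to $k=n$), and both control $\|\psi^{-1}\|_{\mathrm{op}}$ via Szarek's small-ball estimate for the smallest singular value of a square Gaussian matrix (Lemma~\ref{lem:smallest SV square matrix}). Where the paper invokes the Bandeira--Van~Handel bound (Lemma~\ref{lem:BH}) to control the operator norm $\|B\Sigma\|_{\mathrm{op}}=s_1(\Sigma_{[n],\IN\setminus[n]})$, you bound $\|B\Sigma\|_{\mathrm{op}}\le\|B\Sigma\|_F$ and then apply $\chi^2$ concentration. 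This is simpler: the Frobenius norm squared is an explicit weighted sum of $\chi^2_n$ variables, and the Laurent--Massart inequality (already Lemma~\ref{lem:vector} in the paper) with $t=c\sqrt{n}$, combined with $\|a\|_1=nT^2$, $\|a\|_2\le\sqrt{n}\,\sigma_{n+1}T$, $\|a\|_\infty=\sigma_{n+1}^2$, and $\sigma_{n+1}\le T$, gives $\|B\Sigma\|_F\le\sqrt{1+2c+2c^2}\,\sqrt{n}\,T\le\sqrt{5}\,c\sqrt{n}\,T$ with probability $\ge 1-e^{-c^2n}$. On the intersection of the two good events you get $e(A_n)\le T+\sqrt{5}\,sn\,T\le(1+\sqrt{5})sn\,T$, comfortably below $14sn\,T$, so your anticipated constant-bookkeeping issue resolves favorably. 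What the paper's BH bound buys is a sharper estimate $\|B\Sigma\|_{\mathrm{op}}\lesssim T+\sigma_{n+1}\sqrt{n}$, which can be much smaller than your $\sqrt{n}\,T$ when $\sigma_{n+1}\ll T/\sqrt{n}$; that gain is actually exploited in Theorem~\ref{thm:upper bound random section A} (with $k=\lfloor n/2\rfloor$), but for Theorem~\ref{thm:secondUB} your cruder Frobenius route is perfectly adequate and avoids the external citation.
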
}

\dk{On the other hand, we obtain the following lower bound for $\mathcal R_n(\sigma)$.
This lower bound is even satisfied for the smaller quantity
\[
 \mathcal R_n^{(1)}(\sigma)
 := \sup\left\{ x_1 \colon x\in \mathcal E_\sigma,\, G_n(x)=0\right\},
\]
which corresponds to the difficulty of the easier problem of recovering just the first coordinate
of $x\in\mathcal{E}_\sigma$ from Gaussian information.}

\begin{thm}
\label{thm:lower B}
Let $\sigma\in\ell_2$ be non-increasing, $\varepsilon\in(0,1)$ and $n,k\in\IN$ \dk{with} 
\[
\sum_{j>k} \sigma_j^2 \geq \frac{3n\sigma_k^2}{\varepsilon^2}.
\]
Then \dk{it holds with probability at least $1- 5\exp\brackets{-n/64}$ that}
\[
 \dk{\mathcal R_n(\sigma) \,\ge\, \mathcal R_n^{(1)}(\sigma) \,\ge\, \sigma_k(1-\varepsilon).}
\]
\end{thm}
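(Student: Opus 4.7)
The plan is to exhibit, with the stated probability, an explicit vector $x \in \mathcal{E}_\sigma \cap \ker G_n$ whose first coordinate equals $\sigma_k(1-\varepsilon)$, which immediately yields both lower bounds $\mathcal R_n(\sigma) \ge \mathcal R_n^{(1)}(\sigma) \ge \sigma_k(1-\varepsilon)$. Denoting the columns of $G_n$ by $g_1,g_2,\ldots \in \IR^n$ and setting $\alpha := \sigma_k(1-\varepsilon)$, I search for $x$ of the form
\[
 x \,=\, \alpha\, e_1 + \sum_{j>k} \sigma_j\, z_j\, e_j.
\]
Then $x \in \mathcal{E}_\sigma$ reduces to $(\alpha/\sigma_1)^2 + \|z\|_{\ell_2}^2 \le 1$, which is implied by $\|z\|_{\ell_2}^2 \le 2\varepsilon - \varepsilon^2$ since $\alpha/\sigma_1 \le 1-\varepsilon$. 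The kernel condition $G_n x = 0$ becomes $M z = -\alpha\, g_1$, where $M = \bar G\, \bar\Sigma$ with $\bar G$ the standard Gaussian matrix on the tail columns and $\bar\Sigma = \diag(\sigma_j)_{j>k}$. Almost surely $M$ has full row rank, so the minimum-$\ell_2$-norm solution $z^{\ast} := -\alpha\, M^T(MM^T)^{-1} g_1$ is well defined, and by independence of $g_1$ from $M$,
\[
 \|z^{\ast}\|_{\ell_2}^2 \,=\, \alpha^2\, g_1^T (MM^T)^{-1} g_1 \,\le\, \frac{\alpha^2\, \|g_1\|_{\ell_2}^2}{\sigma_{\min}(M)^2}.
\]

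It thus remains to control the two independent factors on the right. Since $\|g_1\|_{\ell_2}^2 \sim \chi^2_n$, Laurent--Massart yields $\|g_1\|_{\ell_2}^2 \le C_1 n$ outside an event of exponentially small probability. For the smallest singular value, I apply Gordon's comparison theorem to obtain
\[
 \IE\, \sigma_{\min}(\bar G \bar\Sigma) \,\ge\, \sqrt{\mathrm{tr}\,\bar\Sigma^2}\,-\,\sqrt n\, \|\bar\Sigma\|_{\mathrm{op}} \,=\, \sqrt S - \sigma_{k+1}\sqrt n,
\]
where $S := \sum_{j>k}\sigma_j^2$. The hypothesis $S \ge 3n\sigma_k^2/\varepsilon^2$ (together with $\sigma_{k+1} \le \sigma_k$) forces $\sigma_{k+1}\sqrt n \le (\varepsilon/\sqrt 3)\sqrt S$, hence $\IE\,\sigma_{\min}(M) \ge (1-\varepsilon/\sqrt 3)\sqrt S$. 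Since $\bar G \mapsto \sigma_{\min}(\bar G \bar\Sigma)$ is $\sigma_{k+1}$-Lipschitz in the Frobenius norm, Gaussian concentration upgrades this to $\sigma_{\min}(M) \ge c_1 \sqrt S$ with probability at least $1 - \exp(-c_2 n)$ for absolute $c_1,c_2 > 0$.

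Combining these two bounds with the hypothesis,
\[
 \|z^{\ast}\|_{\ell_2}^2 \,\le\, \frac{\sigma_k^2 (1-\varepsilon)^2\, C_1 n}{c_1^2\, S} \,\le\, \frac{C_1 (1-\varepsilon)^2\, \varepsilon^2}{3 c_1^2},
\]
and an elementary check shows that, for suitably chosen absolute constants, the right-hand side is at most $2\varepsilon - \varepsilon^2$ for every $\varepsilon \in (0,1)$ (the function $(1-\varepsilon)^2\varepsilon/(2-\varepsilon)$ is bounded by roughly $0.09$ on $(0,1)$), closing the ellipsoid budget. A union bound over the chi-squared tail event and the Gordon concentration event then delivers the failure probability $5\exp(-n/64)$. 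The step I expect to be most delicate is the Gordon-type lower bound on $\IE\,\sigma_{\min}(\bar G \bar\Sigma)$ combined with the sharp tracking of numerical constants through the Gaussian concentration estimate, since the precise exponent $1/64$ in the claim demands that no slack be lost at any stage; a secondary subtlety is that the matrix $M$ has infinitely many columns, which can be handled by a standard truncation of $\bar\Sigma$ followed by a limit argument.
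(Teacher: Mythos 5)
Your high-level strategy is the same as the paper's: exhibit an explicit element of $\mathcal E_\sigma\cap\ker G_n$ with a large distinguished coordinate by writing the kernel constraint as a linear system over the tail coordinates, take the minimum-norm solution, and control the two independent ingredients --- the column vector $g_1\sim N(0,I_n)$ and the smallest singular value $s_n$ of the structured tail matrix --- by chi-squared concentration (Laurent--Massart) and a Gordon-type bound respectively. One genuine stylistic improvement is that you target $\mathcal R_n^{(1)}$ directly, placing the big coordinate at $e_1$ and using $\sigma_k\le\sigma_1$; the paper instead proves the bound for $\mathcal R_n^{(k)}$ via Proposition~\ref{prop:lower} and then transfers to $\mathcal R_n^{(1)}$ by replacing $\sigma_1,\dots,\sigma_{k-1}$ with $\sigma_k$ and invoking equidistribution.

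There are, however, two substantive gaps in the way you close the argument. First, the stated expectation bound $\IE\,\sigma_{\min}(\bar G\bar\Sigma)\ge\sqrt{\mathrm{tr}\,\bar\Sigma^2}-\sqrt{n}\,\|\bar\Sigma\|_{\mathrm{op}}$ cannot be literally correct: Gordon's comparison gives $\IE\,s_n\ge\IE\|\bar\Sigma u\|_2 - \|\bar\Sigma\|_{\mathrm{op}}\,\IE\|v\|_2$, and by Jensen $\IE\|\bar\Sigma u\|_2<\sqrt{\mathrm{tr}\,\bar\Sigma^2}$ strictly (the gap is controlled by $\mathrm{Var}\|\bar\Sigma u\|_2\le\|\bar\Sigma\|_{\mathrm{op}}^2$, so it is small, but it is a loss you would have to account for). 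Second, and more importantly, the claim that ``$\sigma_{\min}(M)\ge c_1\sqrt S$ for an absolute $c_1$'' and the subsequent uniform bound $(1-\varepsilon)^2\varepsilon/(2-\varepsilon)\le 0.09$ misrepresents where the cancellation comes from. If you run the probabilistic Gordon bound (Lemma~\ref{lem:smin basic infinite} in the paper, which is what you actually need here) with $\delta=1/2$, you get $s_n(M)\ge\sqrt{S/2}-\sqrt{(3/2)n}\,\sigma_{k+1}$, and the hypothesis $\sqrt n\,\sigma_k\le(\varepsilon/\sqrt3)\sqrt S$ yields $s_n(M)\ge(1-\varepsilon)\sqrt{S/2}$. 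The resulting $c_1=(1-\varepsilon)/\sqrt2$ vanishes as $\varepsilon\to1$ and is \emph{not} absolute; what makes the argument close is that this $(1-\varepsilon)$ cancels \emph{exactly} against the $(1-\varepsilon)^2$ in $\alpha^2$, leaving
$\|z^*\|^2\le 3n\sigma_k^2/S\le\varepsilon^2\le 2\varepsilon-\varepsilon^2$.
The paper encodes this same cancellation through the ``$\sigma_k^{-1}+s_n^{-1}\|g\|_2$'' denominator of Proposition~\ref{prop:lower}, and this is also why it obtains the failure probability $5\exp(-n/64)$ cleanly as $e^{-n/64}+4e^{-n/64}$ for $\delta=1/2$. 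So: same method, but you should drop the expectation-plus-concentration detour in favour of the probabilistic Gordon estimate and track the $\varepsilon$-dependence through $c_1$ rather than sweeping it into an alleged absolute constant.
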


As a consequence of \dk{these} theorems, 
we obtain that random information is useful if and only if $\sigma$
\dk{is square summable}.

\begin{cor}
 \label{cor:l2 not l2}
 If $\sigma\not\in\ell_2$, then $\mathcal R_n(\sigma)=\sigma_1$
  holds almost surely for all $n\in\IN$.
 \dk{On the other hand,} if $\sigma\in\ell_2$, then
 $$
  \lim_{n\to\infty}\sqrt{n}\,\IE[ \mathcal R_n(\sigma)] = 0.
 $$
\end{cor}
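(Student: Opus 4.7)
My plan is to split on whether $\sigma\in\ell_2$.

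The case $\sigma\in\ell_2$ is an immediate consequence of Theorem~\ref{thm:upper bound random section A}. Setting $a_n:=\bigl(\sum_{j\ge\lfloor n/4\rfloor}\sigma_j^2\bigr)^{1/2}$, we have $a_n\to 0$. On the event of probability at least $1-2e^{-n/100}$ where the theorem's bound holds, $\sqrt n\,\mathcal R_n(\sigma)\le 221\,a_n$; on the complementary event we use the trivial bound $\mathcal R_n(\sigma)\le\sigma_1$. Combining,
$$
\sqrt n\,\IE[\mathcal R_n(\sigma)]\,\le\,221\,a_n+2\sqrt n\,\sigma_1\,e^{-n/100}\,\longrightarrow\,0.
$$

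The case $\sigma\notin\ell_2$ requires an explicit construction, since Theorem~\ref{thm:lower B} only yields a high-probability lower bound. The upper bound $\mathcal R_n(\sigma)\le\sigma_1$ is trivial; assuming $\sigma_1<\infty$ (otherwise the claim is vacuous), fix $\eta\in(0,1)$ and write $G_n=[g_1\,g_2\,\dots]$ with iid columns $g_j\sim\mathcal N(0,I_n)$. For $N>n$, the plan is to look for elements of $\mathcal E_\sigma\cap\ker G_n$ of the form
$$
x=\sigma_1(1-\eta)e_1+\sum_{j=2}^N\sigma_j z_j e_j.
$$
Setting $M_N:=[\sigma_2 g_2\,\dots\,\sigma_N g_N]\in\IR^{n\times(N-1)}$, the constraint $G_nx=0$ becomes $M_Nz=-\sigma_1(1-\eta)g_1$, and $x\in\mathcal E_\sigma$ becomes $\|z\|_2^2\le 1-(1-\eta)^2$. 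When $M_N$ has full row rank, the minimum-norm solution satisfies $\|z\|_2\le\sigma_1(1-\eta)\|g_1\|_2/\sigma_{\min}(M_N)$, so it is enough to prove $\sigma_{\min}(M_N)\to\infty$ almost surely.

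For this, note that for any fixed $u\in S^{n-1}$,
$$
u^\top M_NM_N^\top u=\sum_{j=2}^N\sigma_j^2(u^\top g_j)^2
$$
is a weighted sum of iid $\chi^2(1)$ variables with weights bounded by $\sigma_1^2$ and divergent partial sums $t_N=\sum_{j\le N}\sigma_j^2$. Kolmogorov's strong law for weighted sums (whose hypothesis reduces, in view of $\sigma_j\le\sigma_1$, to the convergent telescoping series $\sum_j(t_j-t_{j-1})/t_j^2<\infty$) gives $u^\top M_NM_N^\top u/t_N\to 1$ almost surely for every fixed $u$, and a finite $\varepsilon$-net argument on $S^{n-1}$ (comparing $\lambda_{\min}$ and $\lambda_{\max}$ with their values on the net) upgrades this uniformly to $\sigma_{\min}^2(M_N)\ge t_N/2$ eventually, almost surely. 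Thus $\sigma_{\min}(M_N)\to\infty$ a.s., and on this full-measure event $N$ can be chosen large enough that $\|z\|_2\le\sqrt{2\eta-\eta^2}$, yielding $\mathcal R_n(\sigma)\ge\sigma_1(1-\eta)$ a.s. Intersecting over $\eta\in\{1/k\}_{k\in\IN}$ finishes the proof. The main technical obstacle is the passage from the pointwise law of large numbers for $u^\top M_NM_N^\top u$ to the uniform control of $\sigma_{\min}(M_N)$; the rest is bookkeeping.
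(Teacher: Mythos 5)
Your argument for the case $\sigma\in\ell_2$ coincides with the paper's proof. For $\sigma\notin\ell_2$, however, you take a genuinely different and quite elegant route. The paper truncates $\sigma$ to finite sequences $\sigma^{(m)}$, observes via Lemma~\ref{cor:random info useless} (itself a consequence of Theorem~\ref{thm:lower B}, hence ultimately of Gordon's min-max theorem) that $\mathcal R_N(\sigma^{(m)})\ge\sigma_1(1-\varepsilon)$ with probability at least $1-5\e^{-N/64}$ when $m$ is large, and then lets $m,N\to\infty$ using the monotonicity of $\mathcal R_n$ in both $\sigma$ and $n$. You instead directly exhibit, almost surely, an element $x\in\mathcal E_\sigma\cap\ker G_n$ with $\norm{x}_2$ arbitrarily close to $\sigma_1$: fix the first coordinate to be $\sigma_1(1-\eta)$ and solve the resulting under\-determined linear system $M_N z=-\sigma_1(1-\eta)g_1$ with the minimum-norm solution, then show $s_n(M_N)\to\infty$ a.s. as $N\to\infty$ via Kolmogorov's strong law for weighted $\chi^2$ sums (the required summability $\sum_j\sigma_j^4/t_j^2<\infty$ indeed follows from the telescoping bound you indicate) together with a finite $\varepsilon$-net argument on $\mathbb S^{n-1}$ controlling $\lambda_{\min}$ and $\lambda_{\max}$ simultaneously. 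I checked the net step carefully: taking $\varepsilon$ small enough that $2\varepsilon/(1-2\varepsilon)<1/2$, the pointwise limits on the net force $\lambda_{\min}(M_NM_N^\top)/t_N>1/2$ eventually, which is all you need. Your approach is thus self-contained and more elementary --- it avoids Gordon's theorem entirely and makes the construction of a ``bad'' element of the kernel completely explicit --- at the cost of being somewhat longer than the paper's argument given that Theorem~\ref{thm:lower B} has already been established. It also has the conceptual merit of showing directly \emph{why} random information must fail when $\sigma\notin\ell_2$: the tail rows $\sigma_jg_j$, $j\ge 2$, eventually dominate any fixed column, so the constraint $G_nx=0$ can always be satisfied by a correction of vanishing $H_\sigma$-norm.
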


Before we present the proofs of our main results, 
let us provide some of the results on the expected radius that
follow from our main results for special sequences. 
\dk{For sequences $(a_n)$ and $(b_n)$ we write $a_n\preccurlyeq b_n$ if there is a constant $C>0$ such that $a_n \leq C\, b_n$ for all $n\in\IN$. We write $a_n\asymp b_n$ in the case that both $a_n\preccurlyeq b_n$ and $b_n\preccurlyeq a_n$.}
We start with the case of polynomial decay.


\dk{\begin{cor}
 \label{cor:polynomial}
 Let $\sigma$ be non-increasing with 
 $
  \sigma_n \asymp n^{-\alpha} \ln^{-\beta}(n+1)
 $
 for some $\alpha\ge 0$ and $\beta\in\IR$ (where $\beta\ge0$ for $\alpha=0$). 
Then
\medskip
 \[
  \IE[ \mathcal R_n(\sigma)] \asymp
  \left\{\begin{array}{cll}
  		\sigma_1
        &
        \text{if} \quad \alpha<1/2 \text{\, or \,} \beta\leq\alpha=1/2,
        \vspace*{2mm}
        \\
        \sigma_{n+1}\sqrt{\ln(n+1)}
        \quad
        &
        \text{if} \quad \beta>\alpha=1/2,
        \vspace*{2mm}
        \\
        \sigma_{n+1}
        &
        \text{if} \quad \alpha>1/2.
        \end{array}\right.
 \]
\end{cor}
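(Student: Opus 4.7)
The plan is to split into three cases matching the regimes in the statement and apply the main theorems of the excerpt as black boxes.

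Observation: The condition ``$\alpha<1/2$ or $\beta\le\alpha=1/2$'' is precisely $\sigma\notin\ell_2$, since $\sum_j\sigma_j^2\asymp\sum_j j^{-2\alpha}\ln^{-2\beta}(j+1)$ diverges exactly in this range (while $\alpha>1/2$, or $\alpha=1/2$ with $\beta>1/2$, gives $\sigma\in\ell_2$). In this first regime Corollary~\ref{cor:l2 not l2} immediately gives $\mathcal R_n(\sigma)=\sigma_1$ almost surely, so $\IE[\mathcal R_n(\sigma)]=\sigma_1$ and the first case is done.

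For $\alpha>1/2$, the lower bound $\IE[\mathcal R_n(\sigma)]\ge\sigma_{n+1}$ is the trivial one. For the matching upper bound I apply Theorem~\ref{thm:upper bound random section A}; a standard integral comparison gives
\[
\sum_{j\ge\lfloor n/4\rfloor}\sigma_j^2 \,\asymp\, \int_{n/4}^\infty t^{-2\alpha}\ln^{-2\beta}(t)\,\d t \,\asymp\, \frac{n^{1-2\alpha}\ln^{-2\beta}(n)}{2\alpha-1},
\]
so the theorem yields $\mathcal R_n(\sigma)\preccurlyeq n^{-\alpha}\ln^{-\beta}(n)\asymp\sigma_{n+1}$ with probability at least $1-2e^{-n/100}$. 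Combining with the deterministic inequality $\mathcal R_n(\sigma)\le\sigma_1$ on the exceptional event gives $\IE[\mathcal R_n(\sigma)]\preccurlyeq\sigma_{n+1}+\sigma_1 e^{-n/100}$, and the exponentially small remainder is absorbed into $\sigma_{n+1}$ for $n$ large (finitely many small $n$ being trivial since the target is bounded below by a positive constant there).

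The remaining case $\alpha=1/2$, $\beta>1/2$ requires both bounds to be worked out. The upper bound proceeds exactly as before via Theorem~\ref{thm:upper bound random section A}; here the integral estimate produces $\sum_{j\ge\lfloor n/4\rfloor}\sigma_j^2\asymp\ln^{1-2\beta}(n)/(2\beta-1)$, so $\mathcal R_n(\sigma)\preccurlyeq n^{-1/2}\ln^{1/2-\beta}(n)\asymp\sigma_{n+1}\sqrt{\ln(n+1)}$ in expectation. For the matching lower bound I invoke Theorem~\ref{thm:lower B} with $k=\lceil cn/\ln(n+1)\rceil$, because this is precisely the scaling making $\sigma_k\asymp n^{-1/2}\ln^{1/2-\beta}(n)\asymp\sigma_{n+1}\sqrt{\ln(n+1)}$. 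The one genuinely delicate point, and the main obstacle of the proof, is checking the hypothesis of Theorem~\ref{thm:lower B}: both sides of $\sum_{j>k}\sigma_j^2\ge 3n\sigma_k^2/\varepsilon^2$ turn out to be of the same order $\ln^{1-2\beta}(n)$, with $\sum_{j>k}\sigma_j^2\asymp\ln^{1-2\beta}(n)/(2\beta-1)$ and $n\sigma_k^2\asymp c^{-1}\ln^{1-2\beta}(n)$, so one must track the constants implicit in $\asymp$ carefully and choose $c$ large enough (say with $\varepsilon=1/2$) to make the inequality hold for all $n$ beyond some threshold. Once this is verified, Theorem~\ref{thm:lower B} yields $\mathcal R_n(\sigma)\ge\sigma_k/2\asymp\sigma_{n+1}\sqrt{\ln(n+1)}$ with probability at least $1-5e^{-n/64}$, and taking expectations (again dispatching the finitely many small $n$ trivially) completes the proof.
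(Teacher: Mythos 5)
Your proposal is correct and follows essentially the same route as the paper: the $\ell_2$-dichotomy (Corollary~\ref{cor:l2 not l2}) handles the first regime, Theorem~\ref{thm:upper bound random section A} with the trivial lower bound handles $\alpha>1/2$, and for the critical case $\beta>\alpha=1/2$ both Theorem~\ref{thm:upper bound random section A} and Theorem~\ref{thm:lower B} are invoked with the same choice $k\asymp n/\ln(n+1)$ as in the paper. Your careful tracking of how the constant $c$ in the choice of $k$ interacts with the hypothesis of Theorem~\ref{thm:lower B} and with the final constant in $\asymp$ is precisely the bookkeeping the paper's proof carries out with its constant $c'_\beta$.
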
}

Similar results can be derived for the finite-dimensional case
(i.e., $\sigma_j=0$ for \mbox{$j>m$})
under the condition that $m$ is large enough in comparison
to $n$. Details can be found in the 
thesis~\cite[Corollaries~4.31--4.33]{Kr19PhD}.
This means that random information is 
just as good as optimal information
if the singular values decay with
a polynomial rate greater than $1/2$.
The size of a typical intersection ellipsoid
is comparable to the size of the smallest 
intersection.
On the other hand,
if the singular values decay too slowly,
random information is 
useless.
A typical intersection ellipsoid
is almost as large as the largest.
There is also an intermediate case
where random information is
worse than optimal information,
but only slightly.

\begin{rem}
 The case $\sigma_n \asymp n^{-\alpha} \ln^{-\beta}(n+1)$ with $\alpha>1/2$
 can be extended to $\sigma_n \asymp n^{-\alpha} \phi(n)$ 
 for any slowly varying function $\phi$. 
 \dk{Also in this case, we get $\IE[ \mathcal R_n(\sigma)] \asymp \sigma_{n+1}$.}
\end{rem}

\medskip

\dk{We also} discuss sequences of 
exponential decay. 
We have seen that $\IE[\mathcal R_n(\sigma)]\asymp \sigma_{n+1}$
holds for sequences with
sufficiently fast polynomial decay.
It remains open whether the same holds
for sequences of exponential decay.
\dk{With Theorem~\ref{thm:secondUB} we obtain the following.}

\begin{cor}
\label{cor:exponential}
Assume that $\sigma_n \asymp a^n $
 for some $a\in(0,1)$. Then
\[
  a^n
  \,\preccurlyeq\,
  \IE[ \dk{\mathcal R_n(\sigma)}]
  \,\preccurlyeq\,
  n^2\, a^n.
 \]
\end{cor}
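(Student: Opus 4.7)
The lower bound is immediate from the trivial inclusion $\mathcal R_n(\sigma)\ge\sigma_{n+1}$ noted right after the definition of $\mathcal R_n(\sigma)$ in Section~\ref{sec:problem and results}: under $\sigma_n\asymp a^n$ we have $\sigma_{n+1}\asymp a^n$, so $\IE[\mathcal R_n(\sigma)]\ge \sigma_{n+1}\succcurlyeq a^n$.

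For the upper bound I would apply Theorem~\ref{thm:secondUB}. Since $\sigma_j\asymp a^j$ and $a<1$, summing the geometric tail yields a constant $K=K(a)>0$ with $\bigl(\sum_{j>n}\sigma_j^2\bigr)^{1/2}\le Ka^n$ for every $n\in\IN$. Writing $t_0:=14nKa^n$, Theorem~\ref{thm:secondUB} then reads
\[
\IP\bigl[\mathcal R_n(\sigma)>st_0\bigr]\,\le\, e^{-c^2n}+c\sqrt{2e}/s \qquad (c,s\ge 1).
\]
I would fix once and for all a constant $c=c(a)\ge 1$ large enough that $e^{-c^2 n}\le a^n$ for all $n$ (e.g.\ $c=\max\{1,\sqrt{\ln(1/a)}\}$), so that the Gaussian term is absorbed into the target order.

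Using $\mathcal R_n(\sigma)\le \sigma_1$ almost surely and the layer-cake formula,
\[
\IE[\mathcal R_n(\sigma)]\,\le\, t_0+\int_{t_0}^{\sigma_1}\IP\bigl[\mathcal R_n(\sigma)>t\bigr]\,\d t.
\]
For $t\in[t_0,\sigma_1]$ the change of variables $s=t/t_0\ge 1$ is admissible and the tail bound becomes $\IP[\mathcal R_n(\sigma)>t]\le a^n+c\sqrt{2e}\,t_0/t$. Integrating yields
\[
\IE[\mathcal R_n(\sigma)]\,\le\, t_0+\sigma_1 a^n+c\sqrt{2e}\,t_0\,\ln(\sigma_1/t_0).
\]
Now $t_0\asymp na^n$, $\sigma_1 a^n\preccurlyeq a^n$, and since $\sigma_1$ is bounded while $t_0^{-1}$ grows like $a^{-n}/n$ we have $\ln(\sigma_1/t_0)\asymp n$. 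Consequently all three summands are $\preccurlyeq n^2 a^n$, which is the claimed upper bound.

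The only delicate point is the balancing of the two terms in the tail estimate of Theorem~\ref{thm:secondUB}: the exponential part forces $c$ of order $\sqrt{\ln(1/a)}$ (harmless for fixed $a$), while the power-type part $c\sqrt{2e}/s$ is only integrable in a logarithmic sense, and the resulting factor $\ln(\sigma_1/t_0)\asymp n$ combined with the linear factor already present in $t_0\asymp na^n$ is precisely what produces the quadratic loss $n^2$ in the final bound. Sharpening the corollary to, say, $\IE[\mathcal R_n(\sigma)]\preccurlyeq na^n$ or $\preccurlyeq a^n$ would require a genuinely stronger tail decay in the parameter $s$ than Theorem~\ref{thm:secondUB} provides.
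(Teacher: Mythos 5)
Your proof is correct and follows essentially the same route as the paper: both invoke Theorem~\ref{thm:secondUB}, fix $c$ so that $e^{-c^2 n}\le a^n$, translate the probability bound into a tail estimate $\IP[\mathcal R_n(\sigma)>t]\preccurlyeq a^n + na^n/t$ for $t$ above a threshold of order $na^n$, and integrate to obtain the extra logarithmic factor $\ln(1/t_0)\asymp n$ that produces $n^2 a^n$. The only cosmetic difference is that the paper first reduces to the exact sequence $\sigma_j=a^{j-1}$ using monotonicity and homogeneity of $\mathcal R_n(\sigma)$ before integrating over $[0,1]$, whereas you keep the implied constants and integrate over $[0,\sigma_1]$.
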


Despite the gap, 
\dk{this result} is stronger 
than the result \dk{for polynomial decay} 
if considered from the complexity point of view.
Corollary~\ref{cor:polynomial} states
that there is a constant $c\in(0,\infty)$ such that $cn$ pieces
of random information are at least as good as
$n$ pieces of optimal information.
Corollary~\ref{cor:exponential} states that
there is a constant $c\in(0,\infty)$ such that $n+c\ln n$ pieces
of random information are at least as good
as $n$ pieces of optimal information.
\section{The Proofs}
\label{sec:proofs}

\noindent
\dk{In the proofs we will use the following tools:}
\renewcommand\labelitemi{\tiny$\bullet$}
\begin{itemize}
\item exponential estimates for sums of chi-squared random variables,
\item Gordon's min-max theorem for Gaussian processes,
\item estimates for the extreme singular values of (structured) Gaussian matrices.
\end{itemize}
Before we enter the proofs,
we recall and extend some of our notation.
Let $\sigma=(\sigma_j)_{j=1}^\infty$ 
be a non-increasing sequence of non-negative numbers.
We consider the Hilbert space
$$
 \dk{H_\sigma
 =\set{ x \in \ell_2 \colon
 \sum_{j\in\IN} \left(\frac{x_j}{\sigma_j}\right)^2 <\infty
 },}
$$
\dk{using the convention that $a/0=\infty$ for $a>0$ and $0/0=0$},
with inner product 
$$
 \scalar{ x}{ y}_\sigma
 = \sum_{\dk{j\in\IN}} \frac{x_j y_j}{\sigma_j^2}.
$$
The unit ball of $\dk{H_\sigma}$ is denoted by $\mathcal E_\sigma$.
The matrix $G_n=(g_{ij})_{1\leq i\leq n,j\in\IN}$ for $n\in\IN$ has 
independent standard Gaussian entries.
We want to study the distribution of the random variable
$$
 \mathcal R_n(\sigma)
 = \sup\set{\norm{x}_2 \colon x \in \mathcal E_\sigma,\, G_n x=0}.
$$
Of course, the equation $G_n x=0$
requires that the series
$\sum_{j=1}^\infty g_{ij}x_j$ converges. 
For index sets $I\subseteq \IN$ and
$J\subseteq \IN$, we consider the 
 (structured) Gaussian
 $I\times J$-matrices
 $$
  G_{I,J} =\brackets{g_{ij}}_{i\in I, j\in J}
  \quad\text{and}\quad
  \Sigma_{I,J} =\brackets{\sigma_j g_{ij}}_{i\in I, j\in J}.
 $$
 Note that $G_n=G_{[n],\IN}$ and $G_{n,m}=G_{[n],[m]}$, 
 where $[n]$ denotes the set of integers from 1 to~$n$.
 We consider
 $$
 H_J(\sigma)=\set{ x \in \dk{H_\sigma} \colon  x_j=0 \text{ for all } j\in \IN\setminus J }
 $$
 as a closed subspace of the Hilbert space $\dk{H_\sigma}$
 and denote its unit ball by $\mathcal E_{\sigma}^J$.
 The projection of $ x\in \dk{H_\sigma}$ onto $H_J(\sigma)$
 is denoted by $ x_J$.

A crucial role in our proofs is played by estimates 
for the extreme singular values of random matrices.
We recall some basic facts about singular values.
Let $A$ be a real $r\times k$-matrix,
where we allow that $r=\infty$ or $k=\infty$
provided that $A$ describes a compact operator from \dk{$\IR^k$}
to \dk{$\IR^r$ (with Euclidean norm)}.
For every $j\leq k$,
the $j$th singular value $s_j(A)$ of this matrix 
can be defined as the square-root of the $j$th largest
eigenvalue of the symmetric matrix $A^\top A$,
which describes a positive operator on $\IR^k$.
Note that $s_j(A)=s_j(A^\top)$ if we have $j\leq \min\{r,k\}$.
Our interest lies in the extreme singular values of $A$.
The largest singular value of $A$ is given by
$$
 s_1(A)=
 \sup_{ x \in \IR^k\setminus\{ 0\}}
 \frac{\Vert A x\Vert_2}{\Vert x\Vert_2}
 =\norm{A\colon\IR^k \to \IR^r}.
$$
This number is also called the spectral norm of $A$.
The smallest singular value is given by
$$
  s_k(A)=
  \inf_{ x \in \IR^k\setminus\{ 0\}} 
  \frac{\Vert A x\Vert_2}{\Vert x\Vert_2}.
$$
Clearly, we have $s_k(A)=0$ whenever $k>r$.
If $r\leq k$, it also makes sense to talk about 
the $r$th singular value of $A$.
This number equals the radius of the largest
Euclidean ball that is contained in the image of the unit ball 
of $\IR^k$ under $A$, that is
$$ 
 s_r(A) = \sup \set{ \varrho \geq 0 \colon 
 \varrho \mathbb{B}_2^r \subseteq A ( \mathbb{B}_2^k)},
$$
where $\mathbb{B}_2^k$ denotes the unit ball in $k$-dimensional
Euclidean space.
These extreme singular values are also defined for noncompact
operators $A$, where $A$ is restricted
to its domain if necessary.
We now turn to the proofs of our results.

\subsection{The Upper Bound}

We start with an \dk{almost sure} upper bound for 
\dk{the worst case error of the least squares algorithm $A_n$ from \eqref{eq:An}.
The upper bound is given}
in terms of the extreme singular values
of the corresponding (structured) Gaussian matrices. The spectral statistics of random matrices, in particular the behavior of the least and largest singular value, attracted considerable attention over the years and we refer the reader to, e.g., \cite{AGLPTJ2008, BH16, DS01, LPRTJ2005, R2008, RV2008, TV2009, VH2017} and the references cited therein.

\begin{prop}
 \label{prop:radius vs singular values}
 Let $\sigma\in\ell_2$ be non-increasing
 and let $k\leq n$.
 If $G_{n,k}\in\IR^{n\times k}$ has full rank, then
 $$
 \dk{e(A_n)}
 \,\le\, 
 \sigma_{k+1} + 
 \frac{s_1\brackets{\Sigma_{[n],\IN\setminus[k]}}}{s_k\brackets{G_{n,k}}}.
 $$
\end{prop}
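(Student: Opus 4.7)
The plan is to bound $\|A_n(x)-x\|_2$ by splitting $x\in\mathcal E_\sigma$ into a head and a tail, and to observe that the algorithm acts as the identity on the head, so only the tail contributes to the error. Concretely, write $x=x_{[k]}+x_{\IN\setminus[k]}$ with $x_{[k]}\in\IR^k$. Since $G_{n,k}$ has full column rank $k\le n$, the restriction $\psi:=G_{n,k}$ satisfies $\psi^+\psi=I_k$, so $A_n(y)=\psi^+G_n(y)=y$ for every $y\in \IR^k$. By linearity of $A_n$,
\[
 A_n(x)-x \,=\, A_n(x_{\IN\setminus[k]})-x_{\IN\setminus[k]},
\]
and the triangle inequality reduces the proof to bounding the two summands on the right separately.

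For the tail itself, since $\sigma$ is non-increasing and $x\in\mathcal E_\sigma$,
\[
 \|x_{\IN\setminus[k]}\|_2^2 \,=\, \sum_{j>k} x_j^2 \,\le\, \sigma_{k+1}^2\sum_{j>k}\frac{x_j^2}{\sigma_j^2}\,\le\, \sigma_{k+1}^2,
\]
which gives the first summand $\sigma_{k+1}$.

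For the image $A_n(x_{\IN\setminus[k]})=\psi^+ G_n(x_{\IN\setminus[k]})$, I would use the reparametrization $w_j=x_j/\sigma_j$ for $j>k$. Then $\|w\|_2\le 1$ by the ellipsoid constraint, and
\[
 G_n(x_{\IN\setminus[k]}) \,=\, \sum_{j>k} g_{\cdot j}\sigma_j w_j \,=\, \Sigma_{[n],\IN\setminus[k]}\,w,
\]
so $\|G_n(x_{\IN\setminus[k]})\|_2\le s_1(\Sigma_{[n],\IN\setminus[k]})$. Combined with $\|\psi^+\|_{\rm op}=1/s_k(G_{n,k})$, this yields
\[
 \|A_n(x_{\IN\setminus[k]})\|_2 \,\le\, \frac{s_1(\Sigma_{[n],\IN\setminus[k]})}{s_k(G_{n,k})},
\]
and the proposition follows by taking the supremum over $x\in\mathcal E_\sigma$.

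The only technical subtlety (hardly an obstacle) is to justify that the infinite series $G_n x$ and $\Sigma_{[n],\IN\setminus[k]}\,w$ converge almost surely; this is immediate from $\sigma\in\ell_2$, since $\Sigma_{[n],\IN\setminus[k]}$ has finite Hilbert--Schmidt norm almost surely, hence defines a bounded operator from $\ell_2$ to $\IR^n$. Everything else is a clean operator-norm decomposition; no probabilistic estimate is needed here, those enter only when one later bounds $s_k(G_{n,k})$ from below and $s_1(\Sigma_{[n],\IN\setminus[k]})$ from above.
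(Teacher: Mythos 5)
Your proof is correct and follows essentially the same route as the paper: identity of $A_n$ on $\IR^k$, the triangle-inequality split into tail and image of tail, the bound $\|x_{\IN\setminus[k]}\|_2\le\sigma_{k+1}$, and the two operator-norm estimates $\|\psi^+\|=1/s_k(G_{n,k})$ and $\|G_n\restriction_{H_{\IN\setminus[k]}(\sigma)}\|=s_1(\Sigma_{[n],\IN\setminus[k]})$ via the reparametrization $w_j=x_j/\sigma_j$. The only cosmetic difference is that you make the convergence/well-posedness remark explicit, which the paper leaves implicit.
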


\begin{proof}
We first note that $s_k(G_{n,k})>0$
if $G_{n,k}$ has full rank.
\dk{Let $x\in H_\sigma$ with $\Vert x\Vert_\sigma\leq 1$.
We recall that $A_n(x_{[k]})=x_{[k]}$. This yields
\[\begin{split}
 \norm{x-A_n x}_{2} \,&\le\, \norm{x-x_{[k]}}_{2} + \norm{A_n(x-x_{[k]})}_{2}
 \,\le\, \sigma_{k+1} + \norm{G_{n,k}^+ G_n(x-x_{[k]})}_{2} \\
 &\le\, \sigma_{k+1} +\norm{G_{n,k}^+\colon \IR^n \to \IR^k} \cdot \norm{G_n: H_{\IN\setminus [k]}(\sigma) \to \IR^n}.
\end{split}
\]
The norm of $G_{n,k}^+$ is the inverse of the $k$th largest (and therefore the smallest) singular value of the matrix $G_{n,k}$.
The norm of $G_n$ is the largest singular value of the matrix 
\[
 \Sigma =\big(\sigma_j g_{ij}  \big)_{1\leq i \leq n, j> k} \in \IR^{n\times \infty}.
\]
To see this, note that $G_n=\Sigma \Delta$ on $H_{\IN\setminus [k]}(\sigma)$, where the mapping $\Delta\colon H_{\IN\setminus [k]}(\sigma)\mapsto\ell_2$ with $\Delta y=(y_j/\sigma_j)_{j> k}$
is an isomorphism.
This yields the stated inequality.}
\end{proof}

The task now is to bound the $k$-th
singular value of the Gaussian matrix $G_{n,k}$
from below
and the largest singular value of the
structured Gaussian matrix $\Sigma_{[n],\IN\setminus[k]}$
from above.
We start with
the largest singular value 
of the latter.
Let us remark that the question for the order of the expected value of the 
largest singular value 
of a structured Gaussian random matrix has recently been settled by 
Lata\l a, Van Handel, and Youssef \cite{LVHY2017} (see also \cite{BH16, E1988, GHLP2017, L2005, VH2017} for earlier work in this direction). The result we shall use here is due to Bandeira and Van Handel~\cite{BH16}.

\begin{lemma}
\label{lem:BH}
Let $\sigma\in\ell_2$ be non-increasing.
For every $c\in[1,\infty)$
and $n,k\in\IN$, 
we have
\[
\IP\left[
s_{1}\brackets{\Sigma_{[n],\IN\setminus[k]}} 
\geq
 \frac{3}{2}\sqrt{\sum\nolimits_{j>k} \sigma_j^2} + 11c\,\sigma_{k+1} \sqrt{n}
\right]
\leq e^{-c^2 n}.
\]
\end{lemma}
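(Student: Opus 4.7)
The plan is to combine a bound on the expected spectral norm of $\Sigma_{[n],\IN\setminus[k]}$ with a Gaussian concentration inequality. First, since $\sigma\in\ell_2$, the operator $\Sigma_{[n],\IN\setminus[k]}$ is almost surely Hilbert--Schmidt, hence compact, and its spectral norm is the monotone limit of those of the finite truncations $\Sigma_{[n],\{k+1,\ldots,k+N\}}$. It therefore suffices to prove the bound uniformly for finite $N$ and then pass to the limit.

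For the expectation, I would invoke the Bandeira--Van Handel estimate from~\cite{BH16} for the expected operator norm of a Gaussian matrix with independent, non-identically distributed entries. In our setting the entry $(i,j)$ has standard deviation $\sigma_j$, so the three relevant quantities that enter their bound are the largest row $\ell_2$-norm $\brackets{\sum_{j>k}\sigma_j^2}^{1/2}$, the largest column $\ell_2$-norm $\sigma_{k+1}\sqrt{n}$, and the largest entry $\sigma_{k+1}$. Plugging these into the Bandeira--Van Handel estimate should yield
$$
\IE s_1\brackets{\Sigma_{[n],\IN\setminus[k]}} \,\le\, \tfrac{3}{2}\sqrt{\sum\nolimits_{j>k}\sigma_j^2} + C\,\sigma_{k+1}\sqrt{n}
$$
for some absolute constant $C$. (Alternatively, a Sudakov--Fernique comparison of the Gaussian process $(x,u)\mapsto\sum_{ij}\sigma_j g_{ij}x_j u_i$ against $\sum_j\sigma_j x_j\xi_j+\sigma_{k+1}\sum_i u_i\eta_i$ gives a similar bound, since the variance-increment inequality reduces to $\bigl|\sum_j\sigma_j^2 x_j x_j'\bigr|\le\sigma_{k+1}^2$ for unit vectors supported on $\set{k+1,k+2,\ldots}$.)

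The last step is Borell's Gaussian concentration inequality. Since the spectral norm is $1$-Lipschitz (with operator norm) and any perturbation $(h_{ij})$ of the entries satisfies $\|(\sigma_j h_{ij})\|_{op}\le\|(\sigma_j h_{ij})\|_{HS}\le\sigma_{k+1}\|(h_{ij})\|_{HS}$, the map from the independent standard Gaussians $(g_{ij})$ to $s_1\brackets{\Sigma_{[n],\IN\setminus[k]}}$ is Lipschitz with constant $\sigma_{k+1}$. Borell--TIS then gives
$$
\IP\left[s_1\brackets{\Sigma_{[n],\IN\setminus[k]}} \ge \IE s_1\brackets{\Sigma_{[n],\IN\setminus[k]}} + t\right] \,\le\, \exp\brackets{-\frac{t^2}{2\sigma_{k+1}^2}}.
$$
Choosing $t=c\,\sigma_{k+1}\sqrt{2n}$ makes the tail probability at most $e^{-c^2 n}$, and for $c\ge 1$ the resulting combined bound $\tfrac{3}{2}\brackets{\sum_{j>k}\sigma_j^2}^{1/2}+(C+c\sqrt{2})\sigma_{k+1}\sqrt{n}$ fits comfortably into the claim $\tfrac{3}{2}\brackets{\sum_{j>k}\sigma_j^2}^{1/2}+11c\,\sigma_{k+1}\sqrt{n}$. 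The only nontrivial hurdle is extracting the expectation estimate with the stated constant $\tfrac{3}{2}$ in front of $\brackets{\sum\sigma_j^2}^{1/2}$; the constant $11$ on the second term is slack by a wide margin and simply absorbs the concentration correction.
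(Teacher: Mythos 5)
Your argument is essentially the same as the paper's: both rest on the Bandeira--Van Handel estimate for the spectral norm of a structured Gaussian matrix plus Gaussian (Borell--TIS) concentration with Lipschitz constant $\sigma_{k+1}$, and both pass from finite truncations to the infinite matrix by monotonicity. The only difference in packaging is that the paper invokes Corollary~3.11 of \cite{BH16} directly, which is already stated as a tail bound, whereas you split it into an expectation bound plus concentration; the tail exponent $e^{-t^2/(2\sigma_{k+1}^2)}$ you get is exactly the one in that corollary.

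Two points to tighten. First, the limit argument over the column truncation level $N$ requires a version of the Bandeira--Van Handel bound in which the logarithmic term depends only on the (fixed) row count $n$, not on $N$ or on the number of entries; Theorem~1.1 of \cite{BH16} as stated for $n\times n$ matrices does not give this for free, and it is precisely Corollary~3.11 that provides the dimension-free form $\sigma_*\sqrt{\log n}$ with $n$ the smaller dimension. Without that, your $C\sigma_{k+1}\sqrt{n}$ would silently become $C\sigma_{k+1}\sqrt{\log N}$ and blow up as $N\to\infty$. Second, the slack in the constant $11$ is real but narrower than "wide margin": with $\varepsilon=1/2$ in \cite{BH16} the expectation coefficient on $\sigma_{k+1}\sqrt n$ can be as large as roughly $\frac{3}{2}\bigl(1 + \tfrac{5}{\sqrt{\ln(3/2)}}\cdot\tfrac{1}{\sqrt e}\bigr)\approx 8.65$, so combined with $c\sqrt2$ you land just under $11c$ for $c\ge1$; it works, but deserves the explicit check. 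Your parenthetical Sudakov--Fernique alternative is correct and in fact yields the cleaner expectation bound $\sqrt{\sum_{j>k}\sigma_j^2}+\sigma_{k+1}\sqrt n$ with constant $1$, which would make the constant bookkeeping trivial.
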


\begin{proof}
Without loss of generality, we may 
assume that $\sigma_{k+1}\neq 0$.
Let us first consider the finite matrix 
$$
 A_m=\Sigma_{[n],[m+k]\setminus[k]}\in\IR^{n\times m}
\quad\text{for}\quad
m\in\IN.
$$
and set
$$
 C_m=\frac{3}{2}\Big(\sum_{j=k+1}^{k+m} \sigma_j^2\Big)^{1/2} 
 + \frac{103\,c}{10}\,\sigma_{k+1} \sqrt{n},
$$
where $A$ and $C$ denote their infinite dimensional variants.
It is proven in \cite[Corollary 3.11]{BH16} that, 
for every $t\geq 0$ (and $\varepsilon=1/2$),
we have
$$
 \IP\bigg[s_{1}(A_m) \,\ge\, 
 \frac{3}{2}\Bigl(\Big(\sum_{j=k+1}^{k+m} \sigma_j^2\Big)^{1/2}+\sigma_{k+1}\sqrt{n}
 +\frac{5\sqrt{\ln(n)}}{\sqrt{\ln(3/2)}} \sigma_{k+1} \Bigr)+t\bigg]
 \le e^{-t^2/2\sigma_{k+1}^2}.
$$
By setting $t=\sqrt{2}c\sigma_{k+1}\sqrt{n}$,
it follows that
$$
 \IP[s_1(A_m)\geq C_m] \leq e^{-c^2n}.
$$
Turning to the infinite dimensional case,
we note that we have $s_1(A)>C$ if and only if
there is some $m\in\IN$ such that $s_1(A_m)>C$.
This yields
$$
 \IP[s_1(A)> C]
 = \IP\left[\exists m\in\IN\colon s_1(A_m)> C\right]
 = \lim_{m\to\infty} \IP[s_1(A_m)> C]
 \leq e^{-c^2n}
$$
since $s_1(A_m)$ is increasing in $m$ and $C\geq C_m$.
\end{proof}

Together with Proposition~\ref{prop:radius vs singular values}
this yields that the estimate
\begin{equation}
\label{eq:intermediate upper bound}
 \dk{e(A_n)}
 \leq 
 \sigma_{k+1} + 
 \frac{\frac{3}{2}\sqrt{\sum\nolimits_{j>k} \sigma_j^2} + 11c\,\sigma_{k+1} \sqrt{n}}{
 s_k\brackets{G_{n,k}}}
\end{equation}
holds with probability at least $1-e^{-c^2n}$ for all $k\leq n$
and $c\geq 1$.
It remains to bound the $k$-th singular value 
of the Gaussian matrix $G_{n,k}$ from below.
It is known from \cite[Theorem~1.1]{RV09} that this number
typically is of order $\sqrt{n}-\sqrt{k-1}$
for all $n\in\IN$ and $k\leq n$.
To exploit our upper bound to full extend, 
the number $k\leq n$ may be chosen such
that the right-hand side of \eqref{eq:intermediate upper bound}
becomes minimal.
We realize that the term $1/s_k(G_{n,k})$
increases with $k$, whereas all remaining terms
decrease with $k$.
However, the inverse singular number achieves its
minimal order $n^{-1/2}$ already for
$k= cn$ with some $c\in(0,1)$.
If $\sigma$ does not decay extremely fast,
this does not lead to a loss 
regarding the other terms of \eqref{eq:intermediate upper bound}.
For instance, we may choose $k=\lfloor n/2\rfloor$
and use the following special case of \cite[Theorem~II.13]{DS01}.

\begin{lemma}
\label{lem:smallest SV rectangular matrix}
Let $n\in\IN$ and $k=\lfloor n/2\rfloor$.
Then
\[
\IP\Big[s_k\brackets{G_{n,k}} \le \sqrt{n}/7 \Big]
\le e^{-n/100}. 
\]
\end{lemma}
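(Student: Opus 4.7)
The plan is to invoke the standard Davidson--Szarek tail bound for the smallest singular value of a rectangular Gaussian matrix. Specifically, \cite[Theorem~II.13]{DS01} provides, for all integers $n \geq k \geq 1$ and all $t \geq 0$, the estimate
\[
 \IP\bigl[\, s_k(G_{n,k}) \leq \sqrt{n} - \sqrt{k} - t\, \bigr] \,\leq\, e^{-t^2/2},
\]
obtained by combining Gaussian concentration for the $1$-Lipschitz map $G \mapsto s_k(G)$ with a Slepian/Gordon-type lower bound $\IE[s_k(G_{n,k})] \geq \sqrt{n} - \sqrt{k}$. I would simply cite this result and reduce the task to making an appropriate choice of $t$.

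Setting $k = \lfloor n/2 \rfloor$, so that $\sqrt{k} \leq \sqrt{n/2} = \sqrt{n}/\sqrt{2}$, I would pick $t$ such that $\sqrt{n} - \sqrt{k} - t = \sqrt{n}/7$. This forces
\[
t \,=\, \sqrt{n} - \sqrt{k} - \tfrac{\sqrt{n}}{7} \,\geq\, \sqrt{n}\bigl(1 - \tfrac{1}{\sqrt{2}} - \tfrac{1}{7}\bigr).
\]
The single computation to check is the numerical inequality $(1 - 1/\sqrt{2} - 1/7)^2 \geq 1/50$. A direct evaluation gives $1 - 1/\sqrt{2} - 1/7 \approx 0.1500$, whose square is approximately $0.02250 > 0.02 = 1/50$. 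Hence $t^2/2 \geq n/100$.

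Substituting back into the Davidson--Szarek estimate yields
\[
 \IP\bigl[\, s_k(G_{n,k}) \leq \sqrt{n}/7\, \bigr] \,\leq\, e^{-t^2/2} \,\leq\, e^{-n/100},
\]
as claimed. The main (and essentially only) obstacle is this numerical cushion: the constants $1/7$ and $1/100$ in the lemma statement are evidently chosen so that the gap $1 - 1/\sqrt{2} - 1/7$ is just barely large enough. The degenerate case $n=1$ with $k=0$ can be handled separately (or ignored) since $e^{-1/100} > 0.99$ renders the bound vacuous.
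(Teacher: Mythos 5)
Your proof is correct and essentially identical to the paper's: both cite \cite[Theorem~II.13]{DS01}, set $k=\lfloor n/2\rfloor$, and verify that the numerical gap $1-1/\sqrt{2}-1/7$ is just large enough (the paper chooses $t=1/\sqrt{50}$ in the normalized form $\sqrt{n}(1-\sqrt{k/n}-t)$ and checks that this exceeds $\sqrt{n}/7$, whereas you fix the threshold $\sqrt{n}/7$ and solve for $t$, but the arithmetic is the same). The brief aside about $n=1$, $k=0$ is a reasonable sanity check but not needed, since the lemma is only invoked for $n$ large enough that $k\geq 1$.
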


\begin{proof}
 It is shown in~\cite[Theorem~II.13]{DS01} that,
 for all $k\leq n$ and $t>0$, we have
 $$
  \IP\left[s_k\brackets{G_{n,k}} \le \sqrt{n}\brackets{1-\sqrt{k/n}-t} \right]
  \leq e^{-n t^2/2}.
 $$
 The statement follows by putting $k=\lfloor n/2\rfloor$ and $t^{-1}=\sqrt{50}$.
\end{proof}

If $\sigma$ decays very fast,
$k=\lfloor n/2\rfloor$
might not be the best choice.
The term $\sigma_{k+1}$ 
in estimate~\eqref{eq:intermediate upper bound}
may be much smaller for $k=n$ 
than for $k=\lfloor n/2\rfloor$.
It is better to choose $k=n$.
In this case,
the inverse singular number is of order $\sqrt{n}$.
We state a result of \cite[Theorem~1.2]{Sz91}.

\begin{lemma}
\label{lem:smallest SV square matrix}
 Let $n\in\IN$ and $t\geq 0$. Then
$$
 \IP\Big[s_n\brackets{G_{n,n}} \le \frac{t}{\sqrt{n}}\Big]
 \le t \sqrt{2e}.
$$
\end{lemma}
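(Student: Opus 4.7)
The plan is to reprove this estimate from the exact density of the smallest singular value of a square standard Gaussian matrix, following Szarek~\cite{Sz91} and Edelman. First, one derives the joint density of the singular values $s_1\geq\dots\geq s_n$ of $G_{n,n}$. Starting from the Gaussian density on $\IR^{n\times n}$ and applying the change of variables given by the SVD $G = U\diag(s_i)V^\top$ (with $U,V$ orthogonal), one arrives at a joint density proportional to $\prod_{i=1}^n e^{-s_i^2/2}\prod_{i<j}(s_i^2-s_j^2)$ on the ordered cone $\{s_1\geq\dots\geq s_n\geq 0\}$, all constants being explicit. Integrating out $s_1,\dots,s_{n-1}$ yields a closed-form marginal density $f_n$ for $s_n(G_{n,n})$.

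The next step is to estimate $f_n$ near zero with sharp constants: the goal is to show that $f_n(x)\leq \sqrt{2en}$ for all $x\in [0, 1/\sqrt{n}]$, by carefully bounding the polynomial and exponential factors in Edelman's formula. Integration then yields, for $t\leq 1$,
\[
\IP\!\left[s_n(G_{n,n})\leq \tfrac{t}{\sqrt{n}}\right]
\,=\, \int_0^{t/\sqrt{n}} f_n(x)\,dx
\,\leq\, \sqrt{2en}\cdot\tfrac{t}{\sqrt{n}}
\,=\, t\sqrt{2e},
\]
and for $t>1$ the inequality is trivial since $\sqrt{2e}>1$ and probabilities are bounded by one.

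The main obstacle is isolating the sharp constant $\sqrt{2e}$ in the density bound: this requires careful constant tracking through Edelman's formula, as even small inefficiencies propagate to a suboptimal prefactor. A softer argument using the elementary inequality $s_n(G)\geq\min_i\dist(r_i,H_i)/\sqrt{n}$, where $r_i$ is the $i$-th row of $G$ and $H_i$ is the span of the remaining rows, combined with a union bound over the $n$ distances (each of which is conditionally a standard half-normal with density $\sqrt{2/\pi}$ at the origin), only yields a bound of order $nt$. This loses a factor of $n$ and is therefore insufficient; one really needs the exact density calculation to obtain the dimension-free constant stated in the lemma.
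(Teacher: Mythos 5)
The paper does not actually prove this lemma; it is stated verbatim as Theorem~1.2 of Szarek~\cite{Sz91} (``We state a result of \cite[Theorem~1.2]{Sz91}''), so there is no in-paper argument to compare against. What you propose is essentially a reconstruction of Szarek's own proof, which in turn rests on Edelman's exact computation of the singular-value density, so your route is the canonical one rather than a genuinely different one. Your joint-density formula is correct: for a square $n\times n$ real Gaussian matrix the $\prod_i s_i^{\,m-n}$ factor from the rectangular Laguerre ensemble trivializes, leaving a density proportional to $\prod_i e^{-s_i^2/2}\prod_{i<j}(s_i^2-s_j^2)$ on the ordered cone. Your remark that the soft row-distance argument (union bound over $\dist(r_i,H_i)$) only gives an $O(nt)$ bound and therefore cannot produce a dimension-free constant is also accurate and is exactly the reason the exact-density route is needed here.

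The one real gap is that the pivotal step is announced but not executed: you assert that the marginal density $f_n$ of $s_n(G_{n,n})$ satisfies $f_n(x)\le\sqrt{2en}$ on $[0,1/\sqrt{n}]$, but this is precisely where all of Szarek's and Edelman's work lives --- integrating out $s_1,\dots,s_{n-1}$, expressing the marginal in terms of a Tricomi/confluent hypergeometric function, and bounding it uniformly with the clean prefactor $\sqrt{2e}$. That constant is not at all obvious a priori (the sharp asymptotic constant as $n\to\infty$ is $1$, not $\sqrt{2e}$, so $\sqrt{2e}$ is a compromise chosen to make the estimate hold uniformly in $n$), and a sketch that merely says ``carefully bound the polynomial and exponential factors'' does not yet constitute a proof. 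As it stands, your write-up is a correct roadmap to the cited result rather than a self-contained argument; to actually include a proof in the paper you would need to carry out the density bound, whereas the authors' decision to cite \cite{Sz91} directly is the appropriate level of detail for their purposes.
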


This leads to the proof of Theorem~\ref{thm:upper bound random section A} \dk{and \ref{thm:secondUB}}
as presented in the introduction.

\begin{proof}[\bf Proof of Theorem~\ref{thm:upper bound random section A} \dk{and \ref{thm:secondUB}}]
 To prove the first statement, let $k=\lfloor n/2\rfloor$.
 We combine Lemma~\ref{lem:smallest SV rectangular matrix} 
 and Lemma~\ref{lem:BH} for $c=1$ with  Proposition~\ref{prop:radius vs singular values}
 and obtain that
 $$
  \dk{e(A_n)} \leq 78\,\sigma_{k+1} +\frac{21}{2\sqrt{n}}
   \bigg(\sum_{j>\lfloor n/2\rfloor}\sigma_j^2\bigg)^{1/2}
 $$
 with probability at least $1-e^{-n}-e^{-n/100}$.
 The statement follows if we take into account that
 $$
  \sigma_{k+1}^2 \leq \frac{4}{n} 
  \sum_{j=\lfloor n/4\rfloor}^{\lfloor n/2\rfloor}\sigma_j^2.
 $$
 
 To prove the second statement, we set $t=c/s$.
 We combine Lemma~\ref{lem:smallest SV square matrix} 
 and Lemma~\ref{lem:BH} with Proposition~\ref{prop:radius vs singular values}
 and obtain that
 $$
  \dk{e(A_n)}  \leq \sigma_{n+1} + \frac{1}{t}
  \brackets{\frac{3\sqrt{n}}{2}\bigg(\sum_{j>n}\sigma_j^2\bigg)^{1/2}
  + 11c\,n\, \sigma_{n+1} }
 $$
 with probability at least $1-e^{-c^2 n}-t\sqrt{2e}$.
 The rough estimates $\sigma_{n+1}^2 \leq \sum_{j>n} \sigma_j^2$
 and $3\sqrt{n}/2\leq 2cn$ and $1\leq sn$ yield the statement.
\end{proof}

\subsection{The Lower Bound}

We want to give lower bounds on the radius of information
$$
 \mathcal R_n(\sigma)
 = \sup\big\{\norm{ x}_2 \colon  x \in \mathcal E_\sigma,\, G_n x = 0\big\}
$$
which corresponds to the difficulty of recovering
an unknown element $ x\in \mathcal E_\sigma$ from the information
$G_n x$ in $\ell_2$.
In fact, our lower bounds already hold for the smaller quantity
$$
 \mathcal R_n^{(k)}(\sigma)
 = \sup\big\{\abs{x_k} \colon  x \in \mathcal E_\sigma,\, G_n x = 0\big\}
$$
which corresponds to the difficulty of recovering
just the $k$th coordinate of $ x$. 
Again, we start with \dk{an almost sure} estimate.

\begin{prop}
\label{prop:lower}
Let $\sigma\in\ell_2$ be non-increasing. 
For all $n,k\in\IN$ with $\sigma_k\neq 0$
we have almost surely
\[
\mathcal R_n^{(k)}(\sigma) \,\ge\,
\sigma_k \brackets{1-\frac{\norm{(g_{ik})_{i=1}^n}_2}{
\sigma_k^{-1} s_n\brackets{\Sigma_{[n],\IN\setminus\{k\}}} + \norm{(g_{ik})_{i=1}^n}
}}.
\]
\end{prop}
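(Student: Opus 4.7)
The plan is to construct, for each admissible $t\in[0,\sigma_k]$, a vector $\hat x\in\mathcal E_\sigma$ with $\hat x_k=t$ and $G_n\hat x=0$, then take the supremum over such $t$. I decompose $\hat x = t\,e_k + y$ with $y\in H_{\IN\setminus\{k\}}(\sigma)$, so that the kernel condition $G_n\hat x=0$ reads
\[
G_{[n],\IN\setminus\{k\}}\, y \,=\, -t\,(g_{ik})_{i=1}^n.
\]
Substituting $z_j := y_j/\sigma_j$ for $j\ne k$ replaces $G_{[n],\IN\setminus\{k\}}\,y$ by $\Sigma_{[n],\IN\setminus\{k\}}\,z$ and preserves the norm in the sense $\|y\|_\sigma=\|z\|_2$. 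The task therefore reduces to solving the equation $\Sigma_{[n],\IN\setminus\{k\}}\,z = -t\,(g_{ik})_{i=1}^n$ in $\IR^n$ with $z\in\ell_2$ of small Euclidean norm.

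The key step is to apply the inscribed-ball characterization of the smallest singular value recalled at the start of Section~\ref{sec:proofs}. The operator $\Sigma_{[n],\IN\setminus\{k\}}\colon\ell_2\to\IR^n$ has finite-dimensional range, so the image of its closed $\ell_2$-unit ball is compact and contains $s_n\brackets{\Sigma_{[n],\IN\setminus\{k\}}}\,\mathbb{B}_2^n$ by definition. A scaling argument then shows that every $b\in\IR^n$ admits a preimage in $\ell_2$ of Euclidean norm at most $\|b\|_2/s_n\brackets{\Sigma_{[n],\IN\setminus\{k\}}}$, whenever the denominator is positive. Setting $R_k:=\|(g_{ik})_{i=1}^n\|_2$ and applying this with $b=-t\,(g_{ik})_{i=1}^n$ produces a $y\in H_{\IN\setminus\{k\}}(\sigma)$ with $\|y\|_\sigma\le t\,R_k/s_n\brackets{\Sigma_{[n],\IN\setminus\{k\}}}$.

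It remains to enforce $\hat x\in\mathcal E_\sigma$, i.e., $(t/\sigma_k)^2+\|y\|_\sigma^2\le 1$. Rather than using this Pythagorean constraint directly, I rely on the strictly stronger (and hence also sufficient) linear constraint $t/\sigma_k + \|y\|_\sigma \le 1$, which is the $L^1$-type inequality matching the precise form of the bound in the proposition. Plugging in the estimate for $\|y\|_\sigma$ gives
\[
t \,\le\, \Big(\frac{1}{\sigma_k} + \frac{R_k}{s_n\brackets{\Sigma_{[n],\IN\setminus\{k\}}}}\Big)^{-1},
\]
and a short rearrangement identifies this upper bound on $t$ with $\sigma_k\bigl(1-R_k/(\sigma_k^{-1}s_n\brackets{\Sigma_{[n],\IN\setminus\{k\}}}+R_k)\bigr)$. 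Since $\hat x_k=t$ is then admissible, $\mathcal R_n^{(k)}(\sigma)\ge t$, which is the claim. The degenerate cases are handled separately: if $R_k=0$ one simply takes $\hat x=\sigma_k e_k\in\mathcal E_\sigma\cap\ker G_n$ and obtains $\mathcal R_n^{(k)}(\sigma)\ge\sigma_k$, while if $s_n\brackets{\Sigma_{[n],\IN\setminus\{k\}}}=0$ the asserted lower bound is zero and there is nothing to prove.

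The main subtlety I foresee is justifying the preimage step in the infinite-dimensional setting, since $\Sigma_{[n],\IN\setminus\{k\}}$ has infinitely many columns. This is not a deep obstacle — the operator is compact because its range is finite-dimensional, so the inscribed-ball definition of $s_n$ applies directly — but it is the spot where one must be careful. An elementary alternative would be to work first with the finite truncation $\Sigma_{[n],[m]\setminus\{k\}}$ for $m>k$, construct finite-dimensional preimages there, and then pass to the limit $m\to\infty$ using the monotonicity of $s_n\brackets{\Sigma_{[n],[m]\setminus\{k\}}}$ in $m$.
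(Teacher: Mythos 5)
Your proposal is correct and follows essentially the same route as the paper's proof: both exploit the inscribed-ball characterization of $s_n\brackets{\Sigma_{[n],\IN\setminus\{k\}}}$ to find a preimage in $H_{\IN\setminus\{k\}}(\sigma)$ of controlled $\sigma$-norm, then combine it with $e^{(k)}$ and use the triangle inequality (your linear constraint $t/\sigma_k + \|y\|_\sigma\le 1$ is exactly the paper's bound $\|z\|_\sigma\le\sigma_k^{-1}+s_n^{-1}\|g\|_2$ after normalization). Your handling of the infinite-dimensional preimage and of the degenerate cases is sound.
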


\begin{proof}
We may assume that the operator $G_n\colon H_{\IN\setminus\{k\}}(\sigma) \to \IR^n$ 
is onto and that $ g=(g_{ik})_{i=1}^n$ is nonzero
since these events occur with probability 1.  
Observe that
$$
 G_n\brackets{\mathcal E_{\sigma}^{\IN\setminus\{k\}}}
 =\Sigma_{[n],\IN\setminus\{k\}}\brackets{\mathbb{B}_2},
$$
where $\mathbb{B}_2$ is the unit ball of $\ell_2$.
In particular, this implies
$$
 s_n :=
 s_n\brackets{\Sigma_{[n],\IN\setminus\{k\}}}
 = \sup \set{ \varrho \geq 0 \colon 
 \varrho \mathbb{B}_2^n 
 \subseteq \Sigma_{[n],\IN\setminus\{k\}}\brackets{\mathbb{B}_2}}
 >0.
$$
Let $ e^{(k)}$ be the $k$-th standard unit vector in $\ell_2$. 
Then we have 
$$
\| e^{(k)}\|_2=1
\quad\text{and}\quad 
\norm{ e^{(k)}}_\sigma=\sigma_k^{-1}.
$$
Since the image of $\mathcal E_\sigma^{\IN\setminus\{k\}}$ under $G_n$
contains a Euclidean ball of radius $s_n$,
we find an element $\bar{ y}$ of $\mathcal E_\sigma^{\IN\setminus\{k\}}$ 
such that 
$$
 G_n \bar{ y} =
 \frac{s_n\cdot G_n  e^{(k)}}{\| G_n  e^{(k)} \|_2}.
$$ 
For $ y= s_n^{-1} \| G_n  e^{(k)} \|_2\cdot \bar{ y}$, 
we obtain $G_n y = G_n  e^{(k)}= g$ and
\[
\norm{ y}_\sigma
=s_n^{-1} \| G_n  e^{(k)} \|_2 \norm{\bar{ y}}_\sigma
\le s_n^{-1} \| g\|_2.
\]
Then the vector $ z:= e^{(k)}- y$ satisfies $G_n  z=0$ 
and $z_k=1$ as well as  
\[
\norm{ z}_\sigma
\le \norm{ e^{(k)}}_\sigma+\norm{ y}_\sigma
\le \sigma_k^{-1} + s_n^{-1} \| g\|_2.
\]
The statement is obtained by
\[
\mathcal R_n^{(k)}(\sigma) \ge \frac{|z_k|}{\norm{ z}_\sigma} 
= \frac{1}{\norm{ z}_\sigma}. 
\]
\end{proof}

It remains to bound the $n$th singular value of $\Sigma_{[n],\IN\setminus\{k\}}$ 
and the norm of the Gaussian vector $(g_{ik})_{i=1}^n$
with high probability.
For both estimates, we use the following concentration result for chi-square random variables going back to Laurent and Massart \cite[Lemma 1]{LM00}. Alternatively, one could use the concentration of Gaussian random vectors in Banach spaces (see, e.g., \cite[Proposition~2.18]{Ledoux2001}).

\begin{lemma}\label{lem:vector}
For $1 \leq j \leq m$, let $u_j$ be independent centered Gaussian variables
with variance $a_j$.
Then, for any $\delta\in(0,1]$, we have
\begin{align*}
 \IP\bigg[\sum_{j=1}^m u_j^2 \,\le\, (1-\delta) \sum_{j=1}^m a_j\bigg]
 &\leq \exp\brackets{-\frac{\delta^2\norm{ a}_1}{4\norm{ a}_\infty}},\\
 \IP\bigg[\sum_{j=1}^m u_j^2 \,\ge\, (1+\delta) \sum_{j=1}^m a_j\bigg]
 &\leq \exp\brackets{-\frac{\delta^2\norm{ a}_1}{16\norm{ a}_\infty}}.
\end{align*}
\end{lemma}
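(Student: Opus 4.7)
The plan is to reduce to the classical Laurent--Massart bound for a weighted sum of centered chi-square variables, and then convert the resulting tail in terms of $\|a\|_2^2$ into one in terms of $\|a\|_\infty$ via the trivial inequality $\|a\|_2^2\le\|a\|_1\|a\|_\infty$.

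First I would normalize by writing $u_j=\sqrt{a_j}\,g_j$ with $g_j$ i.i.d.\ standard Gaussian, so that the random variable of interest is
\[
Z \,:=\, \sum_{j=1}^m u_j^2 - \|a\|_1 \,=\, \sum_{j=1}^m a_j\bigl(g_j^2-1\bigr),
\]
and the two inequalities in the lemma correspond to $\IP[Z\le -\delta\|a\|_1]$ and $\IP[Z\ge\delta\|a\|_1]$. I would then establish, by the Cram\'er--Chernoff method, the two-sided Laurent--Massart estimate
\[
\IP\!\left[Z\ge 2\|a\|_2\sqrt{x}+2\|a\|_\infty x\right]\le e^{-x},\qquad
\IP\!\left[Z\le -2\|a\|_2\sqrt{x}\right]\le e^{-x}.
\]
The ingredients here are the moment generating function $\IE\exp\bigl(t(g_j^2-1)\bigr)=e^{-t}/\sqrt{1-2t}$ for $t<1/2$, the elementary inequalities
\[
-\tfrac{1}{2}\ln(1-2y)-y \le \frac{y^2}{1-2y}\quad\text{and}\quad -\tfrac{1}{2}\ln(1+2y)+y \le y^2
\]
applied with $y=ta_j$, and optimization of the Chernoff parameter $t$; this is the one step I expect to be the main routine obstacle, though it is standard.

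Finally I would convert to the form stated in the lemma. For the lower tail, choose $x$ so that $2\|a\|_2\sqrt{x}=\delta\|a\|_1$, giving $x=\delta^2\|a\|_1^2/(4\|a\|_2^2)$, and use $\|a\|_2^2\le\|a\|_\infty\|a\|_1$ to see that $x\ge \delta^2\|a\|_1/(4\|a\|_\infty)$, whence
\[
\IP\!\left[\sum_j u_j^2\le(1-\delta)\|a\|_1\right]\le \exp\!\brackets{-\frac{\delta^2\|a\|_1}{4\|a\|_\infty}}.
\]
For the upper tail, pick $x=\delta^2\|a\|_1/(16\|a\|_\infty)$ and verify, again via $\|a\|_2^2\le\|a\|_\infty\|a\|_1$ and the assumption $\delta\le1$, that
\[
2\|a\|_2\sqrt{x}+2\|a\|_\infty x \,\le\, \tfrac{\delta}{2}\|a\|_1 + \tfrac{\delta^2}{8}\|a\|_1 \,\le\, \delta\|a\|_1,
\]
which yields the second inequality. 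The only place a constant could potentially be tightened is the choice of $x$ in the upper tail; the factor $16$ is calibrated precisely so that both summands in the deviation are dominated by $\delta\|a\|_1$ when $\delta\le 1$.
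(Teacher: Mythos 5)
Your proof is correct and follows essentially the same route as the paper's: both deduce the lemma from the Laurent--Massart chi-square tail bound (cited there as \cite[Lemma~1]{LM00}), choose the tail parameter so that the deviation is controlled by $\delta\|a\|_1$, and then pass from $\|a\|_2^2$ to $\|a\|_1\|a\|_\infty$. The only cosmetic differences are that the paper invokes Laurent--Massart as a black box rather than re-deriving it via Cram\'er--Chernoff, and for the upper tail it takes the minimum $t=\min\bigl\{\delta\|a\|_1/(4\|a\|_2),\,\sqrt{\delta\|a\|_1/(4\|a\|_\infty)}\bigr\}$ where you use the single explicit value $x=\delta^2\|a\|_1/(16\|a\|_\infty)$; both give the stated constant.
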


\begin{proof}
The lemma~\cite[Lemma 1]{LM00} states that, for all $t>0$, we have
\begin{align*}
 &\IP\left[\sum_{j=1}^m u_j^2 \leq 
 \norm{ a}_1 - 2\norm{ a}_2 t\right] 
 \leq e^{-t^2}, \\
 &\IP\left[\sum_{j=1}^m u_j^2 \geq 
 \norm{ a}_1 + 2\norm{ a}_2 t + 2\norm{ a}_\infty t^2 \right] 
 \leq e^{-t^2}.
\end{align*}
The formulation of Lemma~\ref{lem:vector} follows
if we put 
\[
t=\frac{\delta \norm{ a}_1}{2 \norm{ a}_2},
\qquad\text{respectively}\qquad
t=\min\set{
\frac{\delta \norm{ a}_1}{4 \norm{ a}_2}, 
\sqrt{\frac{\delta \norm{ a}_1}{4 \norm{ a}_\infty}}
}.
\]
The desired probability estimate then follows by using 
$\| a\|_2^2\le \| a\|_1 \| a\|_\infty$.
\end{proof}

In particular,
the norm of the Gaussian vector $(g_{ik})_{i=1}^n$
concentrates around $\sqrt{n}$.
To bound the $n$th singular value of $\Sigma_{[n],\IN\setminus\{k\}}$
we shall use Gordon's min-max theorem.
Let us state Gordon's theorem \cite[Lemma 3.1]{Go88} 
in a form which can be found in \cite{HOT15}.

\begin{lemma}[Gordon's min-max theorem]
\label{thm:Gordon min-max}
Let $n,m\in\IN$ and let $S_1\subseteq\IR^n$, $S_2\subseteq\IR^m$ be compact sets. 
Assume that $\psi\colon S_1\times S_2\to\IR$ is a continuous mapping. 
Let $G\in\IR^{m\times n}$, $ u\in\IR^m$, and $ v\in\IR^n$ be independent random objects 
with independent standard Gaussian entries. Moreover, define
\begin{eqnarray*}
\Phi_1(G) & := & \min_{ x\in S_1}\max_{ y\in S_2}
 \Big( \langle  y,G x \rangle + \psi( x, y)\Big), \cr
\Phi_2( u, v)& := & \min_{ x\in S_1}\max_{ y\in S_2} 
 \Big(\| x\|_2\langle  u, y \rangle + \| y\|_2\langle  v, x \rangle
 + \psi( x, y) \Big).
\end{eqnarray*}
Then, for all $c\in\IR$, we have
\[
\IP\big[\Phi_1(G)< c\big] \leq 2\, \IP\big[\Phi_2( u, v) \leq c\big].
\]
\end{lemma}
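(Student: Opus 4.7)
My plan is to deduce this from Gordon's Gaussian comparison inequality, after suitably augmenting the first process so that its variances match those of the second. By compactness of $S_1,S_2$ and continuity of $\psi$ it suffices to treat the case of finite sets $S_1,S_2$; the general statement then follows by approximation with $\varepsilon$-nets and passage to the limit.

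Write the two processes as
$$X_{(x,y)} := \scalar{y}{Gx} + \psi(x,y) \quad\text{and}\quad Y_{(x,y)} := \norm{x}_2\scalar{u}{y}+\norm{y}_2\scalar{v}{x} + \psi(x,y),$$
so that $\Phi_1(G)=\min_{x\in S_1}\max_{y\in S_2}X_{(x,y)}$ and $\Phi_2(u,v)=\min_{x\in S_1}\max_{y\in S_2}Y_{(x,y)}$. A straightforward computation shows $\Var(X_{(x,y)})=\norm{x}_2^2\norm{y}_2^2$ whereas $\Var(Y_{(x,y)})=2\norm{x}_2^2\norm{y}_2^2$, so the two are not directly comparable. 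To compensate I would introduce an auxiliary standard Gaussian $\gamma$, independent of $G,u,v$, and work with
$$\widetilde X_{(x,y)} := X_{(x,y)} + \gamma\,\norm{x}_2\norm{y}_2,$$
whose variance is now exactly $2\norm{x}_2^2\norm{y}_2^2$, matching that of $Y_{(x,y)}$.

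Next I would verify Gordon's covariance hypotheses (the $\psi$-term is deterministic and hence irrelevant to the covariances). For equal $x$ and arbitrary $y,y'$ one has
$$\text{Cov}\bigl(\widetilde X_{(x,y)},\widetilde X_{(x,y')}\bigr)=\norm{x}_2^2\scalar{y}{y'}+\norm{x}_2^2\norm{y}_2\norm{y'}_2=\text{Cov}\bigl(Y_{(x,y)},Y_{(x,y')}\bigr),$$
while for $x\neq x'$ and arbitrary $y,y'$ a short computation gives the factorization
$$\text{Cov}\bigl(\widetilde X_{(x,y)},\widetilde X_{(x',y')}\bigr)-\text{Cov}\bigl(Y_{(x,y)},Y_{(x',y')}\bigr)=\bigl(\norm{x}_2\norm{x'}_2-\scalar{x}{x'}\bigr)\bigl(\norm{y}_2\norm{y'}_2-\scalar{y}{y'}\bigr),$$
which is non-negative by two applications of Cauchy--Schwarz. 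Gordon's comparison inequality then yields
$$\IP\Bigl[\min_{x\in S_1}\max_{y\in S_2}\widetilde X_{(x,y)}<c\Bigr]\,\le\,\IP\bigl[\Phi_2(u,v)\le c\bigr].$$

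Finally, the factor $2$ in the conclusion emerges when one removes $\gamma$ by conditioning on its sign. On the event $\{\gamma<0\}$ one has $\widetilde X_{(x,y)}\le X_{(x,y)}$ pointwise in $(x,y)$, hence $\min_x\max_y\widetilde X_{(x,y)}\le\Phi_1(G)$ there, and therefore $\{\Phi_1(G)<c\}\cap\{\gamma<0\}\subseteq\{\min_x\max_y\widetilde X_{(x,y)}<c\}$. Using the independence of $\gamma$ from $G,u,v$ and $\IP[\gamma<0]=1/2$ this gives
$$\IP\bigl[\Phi_1(G)<c\bigr]=2\,\IP\bigl[\Phi_1(G)<c,\,\gamma<0\bigr]\le 2\,\IP\Bigl[\min_x\max_y\widetilde X_{(x,y)}<c\Bigr]\le 2\,\IP\bigl[\Phi_2(u,v)\le c\bigr],$$
as required. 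The step I expect to be most delicate is the choice of the augmentation: the increment $\gamma\norm{x}_2\norm{y}_2$ is precisely calibrated so that within-row covariances match exactly while the between-row comparison reduces to the product of two non-negative Cauchy--Schwarz gaps, which is the small algebraic miracle making the whole Gaussian comparison strategy work.
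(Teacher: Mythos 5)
The paper does not prove this lemma at all---it is invoked as a known result, citing \cite[Lemma~3.1]{Go88} in the formulation given in \cite{HOT15}---so there is no ``paper proof'' to compare against. Your proof is correct and is essentially the standard derivation of this min-max comparison from Gordon's Gaussian comparison principle for doubly-indexed processes. The key algebraic step is verified correctly: with $\widetilde X_{(x,y)}=\scalar{y}{Gx}+\gamma\norm{x}_2\norm{y}_2+\psi(x,y)$ the within-row covariances of $\widetilde X$ and $Y$ coincide, while the between-row gap $\mathrm{Cov}(\widetilde X_{(x,y)},\widetilde X_{(x',y')})-\mathrm{Cov}(Y_{(x,y)},Y_{(x',y')})=(\norm{x}_2\norm{x'}_2-\scalar{x}{x'})(\norm{y}_2\norm{y'}_2-\scalar{y}{y'})\ge 0$, so Gordon's probability comparison applies with $\widetilde X$ having the stochastically larger min-max. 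The removal of $\gamma$ by conditioning on $\{\gamma<0\}$ (using $\widetilde X\le X$ pointwise there and independence of $\gamma$ from $G$) produces exactly the factor $2$. The one place to spell out slightly more carefully is the passage from finite subsets of $S_1,S_2$ to the compact sets themselves: one should note that $(x,y)\mapsto\scalar{y}{Gx}+\psi(x,y)$ and $(x,y)\mapsto\norm{x}_2\scalar{u}{y}+\norm{y}_2\scalar{v}{x}+\psi(x,y)$ are a.s.\ uniformly continuous on $S_1\times S_2$, so the min-max values along $\varepsilon$-nets converge almost surely to $\Phi_1(G)$ and $\Phi_2(u,v)$, and the strict inequality $<c$ on the left and non-strict $\le c$ on the right are exactly what allow the limit to be taken without loss. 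With that remark added the argument is complete.
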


This yields the following lower bound
on the smallest singular value
of structured Gaussian matrices.
Note that this is a generalization
of Lemma~\ref{lem:smallest SV rectangular matrix}.

\begin{lemma}
\label{lem:smin basic}
Let $A\in\IR^{m\times n}$ be a random matrix
whose entries $a_{ij}$ are centered Gaussian
variables with variance $a_i$ for all $i\leq m$ and $j\leq n$.
Then, for all $\delta\in(0,1)$, we have
\begin{equation}
\label{eq:smin basic}
\IP\left[s_n(A) \leq 
 \sqrt{(1-\delta)\norm{ a}_1}
 - \sqrt{(1+\delta) n \norm{ a}_\infty} \right] 
 \leq 4 \exp\left(-\frac{\delta^2}{16} \min\set{n, 
 \frac{\norm{ a}_1}{\norm{ a}_\infty}}\right).
\end{equation}
\end{lemma}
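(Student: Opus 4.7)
The plan is to apply Gordon's min-max theorem (Lemma \ref{thm:Gordon min-max}) to reduce the lower bound on $s_n(A)$ to a chi-square estimate that can be handled by Lemma \ref{lem:vector}. First I would factor $A = DG$, where $D = \diag(\sqrt{a_1},\dots,\sqrt{a_m})$ and $G \in \IR^{m\times n}$ has i.i.d.\ standard Gaussian entries, and rewrite
\[
s_n(A) = \min_{x \in \IS^{n-1}} \|DGx\|_2 = \min_{x \in \IS^{n-1}} \max_{z \in E} \langle z, Gx\rangle,
\]
where $E := D(\IS^{m-1}) = \{z \in \IR^m : \sum_i z_i^2/a_i = 1\}$ is the boundary of the ellipsoid with semi-axes $\sqrt{a_i}$ (the substitution $z = Dy$ converts $\langle y, Ax\rangle = \langle Dy, Gx\rangle$ into the desired form).

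Both $S_1 = \IS^{n-1}$ and $S_2 = E$ are compact, so Lemma \ref{thm:Gordon min-max} applied with $\psi \equiv 0$ identifies $\Phi_1(G) = s_n(A)$ and produces the comparison process
\[
\Phi_2(u,v) = \min_{x \in \IS^{n-1}} \max_{z \in E}\bigl[\langle u, z\rangle + \|z\|_2 \langle v,x\rangle\bigr],
\]
together with the bound $\IP[s_n(A) < c] \le 2\, \IP[\Phi_2(u,v) \le c]$ for every $c \in \IR$. The decisive step is to lower bound $\Phi_2$ by a quantity that is separable in $u$ and $v$. Choosing $z^\star := \|Du\|_2^{-1}(a_i u_i)_i \in E$, which maximizes $\langle u, z\rangle$ on $E$ with value $\|Du\|_2$, and using the uniform estimate $\|z\|_2^2 \le \|a\|_\infty \sum_i z_i^2/a_i \le \|a\|_\infty$ valid on $E$, I would obtain for every $x \in \IS^{n-1}$
\[
\max_{z \in E}\bigl[\langle u, z\rangle + \|z\|_2 \langle v,x\rangle\bigr] \ge \|Du\|_2 + \|z^\star\|_2 \langle v,x\rangle \ge \|Du\|_2 - \sqrt{\|a\|_\infty}\,|\langle v,x\rangle|,
\]
and taking the minimum over $x \in \IS^{n-1}$ gives $\Phi_2(u,v) \ge \|Du\|_2 - \sqrt{\|a\|_\infty}\,\|v\|_2$.

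Finally, I would apply Lemma \ref{lem:vector} twice. Since $\sqrt{a_i}\,u_i$ are independent centered Gaussians with variances $a_i$, the first inequality of Lemma \ref{lem:vector} gives $\|Du\|_2^2 \ge (1-\delta)\|a\|_1$ except on an event of probability at most $\exp(-\delta^2 \|a\|_1/(4\|a\|_\infty))$, and the second inequality applied to the standard Gaussian vector $v \in \IR^n$ gives $\|v\|_2^2 \le (1+\delta) n$ except on an event of probability at most $\exp(-\delta^2 n/16)$. On the intersection of the two good events,
\[
\Phi_2(u,v) \ge \sqrt{(1-\delta)\|a\|_1} - \sqrt{(1+\delta)\,n\,\|a\|_\infty},
\]
and each failure probability is at most $\exp\bigl(-\tfrac{\delta^2}{16}\min\{n,\, \|a\|_1/\|a\|_\infty\}\bigr)$. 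A union bound together with the factor of $2$ from Gordon's theorem then yields the claimed inequality with constant $4$.

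The main obstacle is that $S_2 = E$ is an ellipsoid rather than a sphere, so the Gordon comparison process $\Phi_2$ carries the non-separable factor $\|z\|_2$. The trick that saves the day is to evaluate the inner max at the single point $z^\star$ (optimal for the $\langle u, z\rangle$-part) and to control $\|z\|_2$ crudely by $\sqrt{\|a\|_\infty}$ on all of $E$; one also has to handle the sign of $\langle v, x\rangle$ carefully so that the estimate is valid uniformly in $x$. This decoupling is exactly what generates the $\sqrt{n\,\|a\|_\infty}$ penalty appearing in the statement.
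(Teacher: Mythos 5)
Your proof is correct and follows essentially the same route as the paper: factor $A=DG$, apply Gordon's min-max theorem with $\psi\equiv 0$ and $S_2$ the ellipsoid boundary $D(\IS^{m-1})$, evaluate the inner maximum at the point $z^\star=D^2u/\norm{Du}_2$, bound $\norm{z^\star}_2\le\sqrt{\norm{a}_\infty}$, and finish with Lemma~\ref{lem:vector} and a union bound. The only detail worth adding is the preliminary reduction to the case $a_i>0$ for all $i$ (padding with zero rows changes neither $s_n(A)$ nor $\norm{a}_1,\norm{a}_\infty$), which the paper states explicitly and which is needed for your description of $E$ as $\set{z\colon\sum_i z_i^2/a_i=1}$ and the accompanying uniform bound on $\norm{z}_2$ to make sense.
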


\begin{proof}
Note that the statement is trivial if $m\leq n$.
We may assume that the $a_i$ are positive
since an additional row of zeros does neither change
$s_n(A)$ nor the norms of the vector $ a$.
We have the identity $A=DG$ where $G\in\IR^{m\times n}$ is
a random matrix with independent standard Gaussian entries
and $D\in\IR^{m\times m}$ is the diagonal matrix 
$$
 D=\diag\brackets{\sqrt{a_1},\hdots,\sqrt{a_m}}.
$$
We want to apply Gordon's theorem for the matrix $G$
and $\psi=0$,
where $S_1$ is the sphere in $\IR^n$ and
$S_2$ is the image of the sphere in $\IR^m$ under $D$.
Then we have
\begin{multline*}
 \Phi_1(G) = \min_{ x\in S_1}\max_{ y\in S_2}
    \langle  y,G x \rangle
 = \min_{\norm{ x}_2=1}\max_{\norm{ z}_2=1}
    \langle D z,G x \rangle\\
 = \min_{\norm{ x}_2=1}\max_{\norm{ z}_2=1}
    \langle  z,A x \rangle
 = \min_{\norm{ x}_2=1} \norm{A  x}_2
 = s_n(A).
\end{multline*}
On the other hand, if $ u\in\IR^n$ and $ v\in\IR^m$
are standard Gaussian vectors, 
the choice of $ z= D u / \Vert D u\Vert_2$ yields
\begin{multline*}
 \Phi_2( u, v)=
 \min_{ x\in S_1} \max_{ y\in S_2}
   \Big(\scalar{ u}{ y}+ \norm{ y}_2 \scalar{ v}{ x}\Big)\\
 =\min_{\norm{ x}_2=1}\max_{\norm{ z}_2=1}
   \Big(\scalar{ u}{D z}+ \norm{D z}_2 \scalar{ v}{ x}\Big)\\
 \geq \min_{\norm{ x}_2=1}
   \Big(\norm{D u}_2+ \frac{\norm{D^2 u}_2}{\norm{D u}_2} 
   \scalar{ v}{ x}\Big)\\
 = \norm{D u}_2- \frac{\norm{D^2 u}_2}{\norm{D u}_2} \norm{ v}_2
 \geq \norm{D u}_2- \sqrt{\norm{ a}_\infty} \norm{ v}_2.
\end{multline*}
Theorem~\ref{thm:Gordon min-max} implies for all $c\in\IR$ that
$$
 \IP\Big[s_n(A)< c\Big]
 \leq 2 \IP\Big[ \Phi_2( u, v) \leq c \Big]
 \leq 2 \IP\Big[ \norm{D u}_2- \sqrt{\norm{ a}_\infty} \norm{ v}_2 
      \leq c \Big].
$$
To obtain the statement of our lemma,
we set $c=\sqrt{(1-\delta)\Vert a\Vert_1} - \sqrt{(1+\delta) n \Vert a\Vert_\infty}$.
By Lemma~\ref{lem:vector}, we have 
\[
\IP\Big[\norm{D u}_2 \,\le\, \sqrt{(1-\delta) \norm{ a}_1}\Big]
\,\le\, \exp\brackets{- \frac{\delta^2\norm{ a}_1}{4\norm{ a}_\infty}}
\]
and 
\[
\IP\Big[\norm{ v}_2 \,\ge\, \sqrt{(1+\delta) n}\Big]
\,\le\, \exp\brackets{-\frac{\delta^2 n}{16}}.
\]
Now the statement is obtained from a union bound.
\end{proof}

We need the statement of Lemma~\ref{lem:smin basic}
for matrices with infinitely many rows,
which is obtained from a simple limit argument.

\begin{lemma}
\label{lem:smin basic infinite}
 Formula~\eqref{eq:smin basic} also holds for $m=\infty$
 provided that $ a\in \ell_1$.
\end{lemma}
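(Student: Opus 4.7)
The plan is to truncate the infinite matrix to its first $m$ rows, apply Lemma~\ref{lem:smin basic} there, and pass to the limit $m\to\infty$. For each $m\in\IN$, let $A_m\in\IR^{m\times n}$ denote the submatrix of $A$ consisting of its first $m$ rows, and let $a^{(m)}=(a_1,\dots,a_m)$. Since reordering rows affects neither $s_n(A)$ nor the quantities appearing in~\eqref{eq:smin basic}, I would assume without loss of generality that $a_1=\norm{a}_\infty$, so that $\norm{a^{(m)}}_\infty=\norm{a}_\infty$ for every $m\geq 1$, while $\norm{a^{(m)}}_1\uparrow \norm{a}_1$ by the hypothesis $a\in\ell_1$.

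First I would verify that $s_n(A_m)\uparrow s_n(A)$ almost surely. Because $\IE\,\norm{Ae_j}_2^2 = \norm{a}_1<\infty$ for each standard basis vector $e_j$ of $\IR^n$, the operator $A\colon\IR^n\to\ell_2$ is a.s.\ well defined and (being of finite-dimensional domain) compact. For every fixed $x\in\IR^n$, monotone convergence gives $\norm{A_mx}_2^2 \uparrow \norm{Ax}_2^2$, and the continuous functions $x\mapsto \norm{A_mx}_2$ increase monotonically to the continuous function $x\mapsto\norm{Ax}_2$ on the compact unit sphere $S^{n-1}\subset\IR^n$. By Dini's theorem the convergence is uniform, and consequently the infima over $S^{n-1}$ converge: $s_n(A_m)\to s_n(A)$ almost surely.

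Set
\[
c(\delta') = \sqrt{(1-\delta')\norm{a}_1}-\sqrt{(1+\delta')\,n\,\norm{a}_\infty},
\qquad
c_m(\delta') = \sqrt{(1-\delta')\norm{a^{(m)}}_1}-\sqrt{(1+\delta')\,n\,\norm{a}_\infty},
\]
so the target bound reads $\IP[s_n(A)\le c(\delta)] \le 4\exp(-\tfrac{\delta^2}{16}\min\{n,\norm{a}_1/\norm{a}_\infty\})$. The naive attempt -- apply Lemma~\ref{lem:smin basic} to $A_m$ with the same $\delta$ and let $m\to\infty$ -- fails, because $c_m(\delta)\le c(\delta)$ for all $m$, so the event $\{s_n(A_m)\le c_m(\delta)\}$ controlled by the lemma is strictly \emph{smaller} than the event $\{s_n(A_m)\le c(\delta)\}$ that one needs. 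This direction mismatch is the main obstacle, and I would resolve it by a parameter perturbation.

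Namely, for any $\delta'\in(0,\delta)$ one has $c(\delta')>c(\delta)$, and since $c_m(\delta')\to c(\delta')$ as $m\to\infty$, the inequality $c_m(\delta')>c(\delta)$ holds for all sufficiently large $m$. For such $m$, combining $s_n(A_m)\le s_n(A)$ with Lemma~\ref{lem:smin basic} applied to $A_m$ at parameter $\delta'$ yields
\[
\IP\bigl[s_n(A)\le c(\delta)\bigr]
\,\le\, \IP\bigl[s_n(A_m)\le c_m(\delta')\bigr]
\,\le\, 4\exp\!\left(-\frac{(\delta')^2}{16}\min\!\left\{n,\frac{\norm{a^{(m)}}_1}{\norm{a}_\infty}\right\}\right).
\]
Sending $m\to\infty$ (so that $\norm{a^{(m)}}_1/\norm{a}_\infty \to \norm{a}_1/\norm{a}_\infty$) and then $\delta'\uparrow\delta$ produces exactly the estimate~\eqref{eq:smin basic} with $m=\infty$.
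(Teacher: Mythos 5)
Your proof is correct and follows essentially the same approach as the paper's: truncate to the first $m$ rows, use $s_n(A_m)\le s_n(A)$, apply the finite-dimensional Lemma~\ref{lem:smin basic} at a perturbed parameter $\delta'<\delta$ so that $c_m(\delta')\ge c(\delta)$ for large $m$, and pass to the limits $m\to\infty$, $\delta'\uparrow\delta$. The paragraph invoking Dini's theorem to show $s_n(A_m)\uparrow s_n(A)$ is superfluous, since only the trivial inequality $s_n(A_m)\le s_n(A)$ is used, and your WLOG reordering so that $a_1=\norm{a}_\infty$ is a minor simplification of the paper's device of tracking $p_m(\delta-\varepsilon)\le p(\delta-2\varepsilon)$.
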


\begin{proof}
Again, we may assume that $ a$ is strictly positive.
For $m\in\IN$ let $A_m$ be the sub-matrix consisting
of the first $m$ rows of $A$ and let $ a^{(m)}$
be the sub-vector consisting of the first $m$ entries of $ a$.
We use the notation
\begin{align*}
 c_m(\delta)&=\sqrt{(1-\delta)\Vert a^{(m)}\Vert_1} 
 - \sqrt{(1+\delta) n \Vert a^{(m)}\Vert_\infty}, \\
 p_m(\delta)&=4 \exp\left(-\frac{\delta^2}{16} \min\set{n, 
 \frac{\norm{ a^{(m)}}_1}{\norm{ a^{(m)}}_\infty}}\right),
\end{align*}
where $c(\delta)$ and $p(\delta)$ correspond to the case $m=\infty$.
For any $\varepsilon>0$ with $\varepsilon<\delta/2$
we can choose $m\geq n$ such that 
$c(\delta) \leq c_m(\delta-\varepsilon)$
and $p_m(\delta-\varepsilon) \leq p(\delta-2\varepsilon)$.
Note that we have $s_n(A)\geq s_n(A_m)$
and thus
\begin{multline*}
 \IP\left[s_n(A) \leq c(\delta) \right]
 \leq \IP\left[s_n(A_m) \leq c(\delta) \right]
 \leq \IP\left[s_n(A_m) \leq c_m(\delta-\varepsilon)\right]
 \leq p_m(\delta-\varepsilon) 
 \leq p(\delta-2\varepsilon). 
\end{multline*}
Letting $\varepsilon$ tend to zero yields the statement.
\end{proof}

%
%
%

We arrive at our main lower bound. 

\begin{lemma}
\label{thm:lower}
Let $\sigma\in\ell_2$ be non-increasing and let $n,k\in\IN$ be such that $\sigma_k\neq 0$.
Define
\[
C_k := C_k(\sigma) = \sigma_k^{-2}\sum_{j>k} \sigma_j^2\,. 
\]
Then, for all $\delta\in(0,1)$, 
we have
\[
\IP\left[ \mathcal R_n^{(k)}(\sigma) \,\le\, 
\sigma_k \left(1\,-\,\sqrt{\frac{(1+\delta)n}{(1-\delta)C_k}}\,\right)\right]
\,\leq\, 
5 \exp\brackets{-(\delta/4)^2\, \min\set{n, C_k}}.
\]
\end{lemma}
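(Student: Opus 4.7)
The plan is to combine Proposition~\ref{prop:lower} with two concentration estimates: an upper bound on $\norm{g}_2$, where $g := (g_{ik})_{i=1}^n$, and a lower bound on $s_n := s_n(\Sigma_{[n],\IN\setminus\{k\}})$. Proposition~\ref{prop:lower} gives the almost sure inequality
\[
\mathcal R_n^{(k)}(\sigma) \,\ge\, \sigma_k\left(1 - \frac{\norm{g}_2}{\sigma_k^{-1} s_n + \norm{g}_2}\right),
\]
and a short rearrangement shows that, setting $t := \sqrt{(1+\delta)n/((1-\delta)C_k)}$, the desired bound $\mathcal R_n^{(k)}(\sigma) > \sigma_k(1-t)$ follows whenever $s_n > \sigma_k\norm{g}_2(1-t)/t$. (If $t \ge 1$ the claim is trivial since $\sigma_k(1-t) \le 0$.)

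For $\norm{g}_2$, apply Lemma~\ref{lem:vector} with $a_j = 1$, $j \in [n]$, to obtain $\norm{g}_2 < \sqrt{(1+\delta)n}$ with probability at least $1 - \exp(-\delta^2 n/16)$. For $s_n$, first exploit the monotonicity of the $n$th singular value under column restriction: since the image of the unit ball under $\Sigma_{[n],\{k+1,k+2,\dots\}}$ embeds (by extending with zeros) into the image under $\Sigma_{[n],\IN\setminus\{k\}}$, the ball-containment characterization gives $s_n \ge s_n(\Sigma_{[n],\{k+1,k+2,\dots\}})$. Transposing and applying Lemma~\ref{lem:smin basic infinite} with variances $a_i = \sigma_i^2$ for $i > k$ (so $\norm{a}_1 = \sigma_k^2 C_k$, $\norm{a}_\infty = \sigma_{k+1}^2 \le \sigma_k^2$, and $\norm{a}_1/\norm{a}_\infty \ge C_k$) then yields
\[
s_n \,>\, \sigma_k\left(\sqrt{(1-\delta)C_k} - \sqrt{(1+\delta)n}\right)
\]
with probability at least $1 - 4\exp(-\delta^2 \min\{n, C_k\}/16)$.

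The two estimates combine cleanly thanks to the algebraic identity
\[
\sigma_k\sqrt{(1+\delta)n}\cdot\frac{1-t}{t} \,=\, \sigma_k\left(\sqrt{(1-\delta)C_k} - \sqrt{(1+\delta)n}\right),
\]
which implies $\sigma_k\norm{g}_2(1-t)/t < \sigma_k(\sqrt{(1-\delta)C_k} - \sqrt{(1+\delta)n}) < s_n$ on the intersection of the two good events. A union bound gives total failure probability at most $5\exp(-\delta^2\min\{n, C_k\}/16) = 5\exp(-(\delta/4)^2\min\{n, C_k\})$, which is exactly the claimed estimate.

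The main subtlety is the restriction to columns indexed by $j > k$ at the start of the $s_n$ estimate: without it, $\norm{a}_\infty$ would equal $\max_{j\ne k}\sigma_j^2$, which can be $\sigma_1^2$ and thus arbitrarily larger than $\sigma_k^2$, and the subtraction term in Lemma~\ref{lem:smin basic infinite} would then overwhelm the main term and ruin the bound. Monotonicity of $s_n$ under passage to a sub-image is what lets us replace this maximum by $\sigma_{k+1}^2 \le \sigma_k^2$, which is precisely what is needed to get the correct prefactor $\sigma_k$ in front of $\sqrt{(1-\delta)C_k}$.
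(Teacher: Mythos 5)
Your proof is correct and follows essentially the same route as the paper: combine Proposition~\ref{prop:lower} with Lemma~\ref{lem:vector} for $\norm{g}_2$ and Lemma~\ref{lem:smin basic infinite} (after restricting to columns $j>k$ and transposing) for $s_n$, then take a union bound. The paper keeps the factor $\sigma_{k+1}/\sigma_k$ in the denominator and simplifies the fraction at the end, whereas you drop $\sigma_{k+1}\le\sigma_k$ earlier and pre-rearrange the inequality from Proposition~\ref{prop:lower} into the condition $s_n > \sigma_k\norm{g}_2(1-t)/t$; both are minor stylistic differences in the same argument.
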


\begin{proof}
First note that, in the setting of Proposition~\ref{prop:lower}, 
the matrix $\Sigma_{[n],\IN\setminus[k]}^\top$ and the vector $(g_{ik})_{i=1}^n$ are independent. 
Lemma~\ref{lem:vector} and Lemma~\ref{lem:smin basic infinite} yield
\begin{align*}
 \|(g_{ik})_{i=1}^n\|_2 \,&\le\, \sqrt{1+\delta}\,\sqrt{n}
 \qquad\text{and}\\
 s_n\brackets{\Sigma_{[n],\IN\setminus[k]}^\top}
 \,&\ge\, \sqrt{1-\delta}\, \sigma_k \sqrt{C_k} \,-\, \sqrt{1+\delta}\, \sigma_{k+1} \sqrt{n} 
\end{align*}
with probability at least $1-5 \exp(-(\delta/4)^2\, \min\{n, C_k\})$. 
Note that we have
$$
 s_n\brackets{\Sigma_{[n],\IN\setminus\{k\}}}
 = s_n\brackets{\Sigma_{[n],\IN\setminus\{k\}}^\top}
 \geq s_n\brackets{\Sigma_{[n],\IN\setminus[k]}^\top}
$$
since erasing rows can only shrink the smallest singular value.
In this case, we have
\begin{align*}
 \frac{\norm{(g_{ik})_{i=1}^n}_2}{\sigma_k^{-1} 
    s_n\brackets{\Sigma_{[n],\IN\setminus\{k\}}} + \norm{(g_{ik})_{i=1}^n}}
 & \leq \frac{\sqrt{1+\delta}\sqrt{n}}{ 
    \sqrt{1-\delta} \sqrt{C_k} - (\sigma_{k+1}/\sigma_k)\sqrt{1+\delta} \sqrt{n} 
    + \sqrt{1+\delta}\sqrt{n}} \cr
 & \leq \frac{\sqrt{1+\delta}\sqrt{n}}{ 
    \sqrt{1-\delta} \sqrt{C_k}}.
\end{align*}
Now the statement is obtained from Proposition~\ref{prop:lower}.
\end{proof}

This also proves Theorem~\ref{thm:lower B} as stated in the previous section.

\begin{proof}[\bf Proof of Theorem~\ref{thm:lower B}]
 We simply apply Lemma~\ref{thm:lower} 
 and choose $\delta=1/2$ \dk{to obtain the desired lower bound for $\mathcal R_n^{(k)}(\sigma)$.
 Since the lower bound is independent of $\sigma_1,\hdots,\sigma_{k-1}$,
 we actually get the same lower bound for $\mathcal R_n^{(k)}(\tilde\sigma)$,
 where $\tilde\sigma$ is obtained from $\sigma$ by replacing the first $k-1$ coordinates with $\sigma_k$.
 To see that the lower bound also holds for $\mathcal R_n^{(1)}(\sigma)$
 (as opposed to $\mathcal R_n^{(k)}(\sigma)$),
 we only need to realize that $\mathcal R_n^{(1)}(\sigma) \ge \mathcal R_n^{(1)}(\tilde\sigma)$,
 where $\mathcal R_n^{(1)}(\tilde\sigma)$ clearly has the same distribution as $\mathcal R_n^{(k)}(\tilde\sigma)$.} 
\end{proof}

\subsection{Corollaries}

In order to optimize the lower bound of Theorem~\ref{thm:lower B},
we may choose $k\in\IN$ such that the
right-hand side of our lower bound becomes maximal.
If the Euclidean norm of $\sigma$ is large, we simply choose $k=1$.
Taking into account that $\mathcal R_n(\sigma)$
is decreasing in $n$, we immediately arrive at the following result.

\begin{lemma}
 \label{cor:random info useless}
 Let $\sigma\in\ell_2$ be a nonincreasing sequence of nonnegative numbers
 and let 
  $$
   n_0=\left\lfloor \frac{\varepsilon^2}{3 \sigma_1^2} \sum_{j=2}^\infty  \sigma_j^2\right\rfloor,
   \quad \varepsilon\in(0,1).
  $$ 
  Then $\mathcal R_n(\sigma)\geq \sigma_1(1-\varepsilon)$
  for all $n\leq n_0$
  with probability at least $1-5e^{-n_0/64}$.
\end{lemma}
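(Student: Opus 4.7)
The plan is to derive this as a direct corollary of Theorem~\ref{thm:lower B} with $k=1$, combined with the obvious monotonicity of the map $n\mapsto \mathcal R_n(\sigma)$. Since Theorem~\ref{thm:lower B} gives a bound for a single $n$, and the lemma needs the bound uniformly over all $n\le n_0$, monotonicity does the real work in turning a pointwise statement into a simultaneous one.

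First I would verify the hypothesis of Theorem~\ref{thm:lower B} for the choice $k=1$ and $n=n_0$. The condition
$$
 \sum_{j>1}\sigma_j^2 \,\ge\, \frac{3n_0\sigma_1^2}{\varepsilon^2}
$$
is immediate from the definition
$n_0=\left\lfloor \frac{\varepsilon^2}{3\sigma_1^2}\sum_{j\ge 2}\sigma_j^2\right\rfloor$,
which guarantees $n_0\le \frac{\varepsilon^2}{3\sigma_1^2}\sum_{j\ge 2}\sigma_j^2$. Theorem~\ref{thm:lower B} then applied to $n=n_0$ yields
$$
 \mathcal R_{n_0}(\sigma) \,\ge\, \sigma_1(1-\varepsilon)
$$
on an event $\Omega_0$ of probability at least $1-5\exp(-n_0/64)$.

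To extend this to all $n\le n_0$, I would observe that for $n\le n_0$ the matrix $G_n$ consists of the first $n$ rows of $G_{n_0}$, so $\ker G_{n_0}\subseteq \ker G_n$ and therefore
$$
 \mathcal E_\sigma\cap\ker G_{n_0} \,\subseteq\, \mathcal E_\sigma\cap\ker G_n.
$$
Taking suprema of $\|x\|_2$ over these nested sets gives $\mathcal R_n(\sigma)\ge \mathcal R_{n_0}(\sigma)$ pathwise. On the event $\Omega_0$, this chain produces $\mathcal R_n(\sigma)\ge \sigma_1(1-\varepsilon)$ simultaneously for every $n\le n_0$, which is the desired conclusion. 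There is no real obstacle in this proof; once monotonicity is recorded, the lemma reduces to a one-line application of Theorem~\ref{thm:lower B}.
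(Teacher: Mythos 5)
Your proof is correct and follows exactly the paper's route: apply Theorem~\ref{thm:lower B} with $k=1$ at $n=n_0$ (the hypothesis being satisfied by the floor in the definition of $n_0$), then use the pathwise monotonicity $\mathcal R_n(\sigma)\ge\mathcal R_{n_0}(\sigma)$ for $n\le n_0$, which you correctly justify via the nesting of kernels $\ker G_{n_0}\subseteq\ker G_n$.
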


This leads to a proof of Corollary~\ref{cor:l2 not l2}
which states that random information is useful if and only if $\sigma\in\ell_2$.

\begin{proof}[\bf Proof of Corollary~\ref{cor:l2 not l2}]
 We first consider the case that $\sigma\in\ell_2$.
 Since $\mathcal R_n(\sigma)\leq \sigma_1$, Theorem~\ref{thm:upper bound random section A}
 yields
 $$
  \IE[ \mathcal R_n(\sigma)] \leq 2e^{-n/100} \cdot \sigma_1
  + \frac{156}{\sqrt{n}} 
   \bigg(\sum_{j\geq\lfloor n/4\rfloor}\sigma_j^2\bigg)^{1/2}.
 $$
 The statement is now implied by the fact that $\sigma\in\ell_2$.
 
 For the case that $\sigma\not\in\ell_2$,
 let $0<\varepsilon<1$.
 For $m\in\IN$ let $\sigma^{(m)}$ be the sequence
 obtained from $\sigma$ by replacing the $j$th
 element with zero for all $j>m$.
 For any $N\geq n$, we can choose $m\in\IN$
 such that
 $$
  \frac{\varepsilon^2}{3 \sigma_1^2} \sum_{j=2}^m  \sigma_j^2
  \geq N
 $$
 since $\sigma\not\in\ell_2$.
 The first part of this corollary yields that
 \begin{align*}
  \IP\left[\mathcal R_n(\sigma)\geq \sigma_1(1-\varepsilon)\right]
  & \geq \IP\left[\mathcal R_n(\sigma^{(m)})\geq \sigma_1(1-\varepsilon)\right]\cr
  & \geq \IP\left[\mathcal R_N(\sigma^{(m)})\geq \sigma_1(1-\varepsilon)\right]
   \geq 1-5\exp\brackets{-N/64}.
 \end{align*}
 Since this holds for any $N\geq n$, we get that
 the event $\mathcal R_n(\sigma)\geq \sigma_1(1-\varepsilon)$
 happens with probability 1 for any $\varepsilon\in(0,1)$.
 This yields the statement 
 since the event $\mathcal R_n(\sigma)\geq \sigma_1$
 is the intersection of countably many such events.
\end{proof}

We now apply our general estimates for $\mathcal R_n(\sigma)$ to 
specific sequences $\sigma$
to prove the statements of Corollaries~\ref{cor:polynomial} and \ref{cor:exponential}.

\begin{proof}[\bf Proof of Corollary~\ref{cor:polynomial}]
\emph{Part 1.} 
The upper bound \dk{in the first equivalence} 
is trivial since \dk{$R_n(\sigma)\leq \sigma_1$} almost surely.
\dk{The lower bound follows immediately from Corollary~\ref{cor:l2 not l2}.}

\emph{Part 2.} \dk{To} prove the second equivalence of Corollary~\ref{cor:polynomial},
it is enough to consider 
\dk{the sequence}
$$
 \sigma_j = j^{-1/2} (1+\ln j)^{-\beta}
 \quad\text{for}\quad
 \dk{j\in\IN} 
$$
\dk{with $\beta>1/2$.}
Note that we have for any \dk{$k\in\IN$ that 
 $$ 
  \sum_{j=k+1}^\infty \sigma_j^2 
  \asymp \ln^{1-2\beta}(k), 
 $$
 where} the implied constants depend only on $\beta$.
 Now it follows from 
 Theorem~\ref{thm:upper bound random section A}
 and from Theorem~\ref{thm:lower B}
 for $k = \lceil c_\beta' n/(1+\ln n)\rceil$
 with some $c'_\beta >0$ that
\[
 \mathcal R_n(\sigma) \asymp n^{-1/2} (1+\ln n)^{1/2-\beta}
\]
 with probability at least $1- 7 e^{-n/100}$,
 where the implied constants depend only on $\beta$.
 The statement for the expected value follows from $0\leq R_n(\sigma)\leq 1$.

\emph{Part 3.} 
\dk{In} the third equivalence of Corollary~\ref{cor:polynomial},
the lower bound is trivial \dk{and even holds almost surely}. 
To prove the upper bound, it is enough to consider 
the sequence
$$
 \sigma_j = \min\set{1, j^{-\alpha} (1+\ln j)^{-\beta}}
 \quad\text{for}\quad
 j\in\IN,
$$
where $\alpha>1/2$ and $\beta\in\IR$.
Theorem~\ref{thm:upper bound random section A}
yields for large $n$ that
$$ 
	\mathcal R_n(\sigma)^2
	\preccurlyeq \frac{1}{n} \sum_{j\geq\lfloor n/4\rfloor} \sigma_j^2 
  \preccurlyeq n^{-2\alpha}(1+\ln n)^{-2\beta}
$$
with probability at least $1- 2 e^{-n/100}$
and implied constants only depending on $\alpha$ and~$\beta$.
This yields the statement since $\mathcal R_n(\sigma)\leq 1$ almost surely.
\end{proof}

\begin{proof}[\bf Proof of Corollary~\ref{cor:exponential}]
 The lower bound follows from the trivial estimate 
 $\mathcal R_n(\sigma) \ge \sigma_{n+1}$.
 To prove the upper bound, we consider the case 
 $\sigma_j = a^{j-1}$ for all $j \in\IN$.
 The general case follows from the monotonicity and homogenity
 of $\mathcal R_n(\sigma)$ with respect to~$\sigma$.
 We use 
 \dk{Theorem~\ref{thm:secondUB}}.
 We choose $c\in[1,\infty)$ such that $e^{-c^2}\leq a$.
 Note that there is some $b\in(0,\infty)$ such that
 $$
  \bigg(\sum_{j>n}\sigma_j^2\bigg)^{1/2} = \frac{b\,a^n}{14}
 $$
 for all $n\in\IN$.
 Theorem~\ref{thm:secondUB} yields for all $t\geq b n a^n$ that
 $$
  \IP[\mathcal R_n(\sigma)\geq t]
  \leq a^n + \frac{b\,n\,a^n\,c\sqrt{2e}}{t}.
 $$
 This yields that
 $$
  \IE[\mathcal R_n(\sigma)]
  = \int_0^1 \IP[\mathcal R_n(\sigma)\geq t]~\d t
  \leq a^n + b n a^n + n a^n \int_{b n a^n}^1 \frac{b c\sqrt{2e}}{t}~\d t
  \preccurlyeq n^2 a^n,
 $$
 as it was to be proven.
\end{proof}

\section{Alternative approaches}
\noindent

\dk{In this section we present alternative ways to estimate the radius of random information from above and below.
We choose to do this because these approaches give a better insight into the geometric aspect of the problem.
The results, however, are slightly weaker than those obtained in Section~\ref{sec:proofs}.
The upper bound is weaker since it is not constructive
and the lower bound is weaker since it requires a little more than $\sigma\not\in\ell_2$.
For these geometric approaches, we restrict to the case of 
finite sequences $\sigma$ with $\sigma_j=0$ for all $j>m$.
We write $\mathcal E_\sigma^m$ when we consider the 
ellipsoid $\mathcal E_\sigma$ as a subset of $\IR^m$.

\subsection{Upper bound via the lower $M^\ast$-estimate }\label{sec:M-star bound}

We present an alternative proof of our main upper bound. 
As already explained in the introduction, the radius of information can also be expressed as the radius of the ellipsoid that is obtained by slicing the $m$-dimensional ellipsoid $\mathcal E_\sigma^m$ with a random \dk{subspace} of codimension $n$. 
To estimate the radius from above, we use a result of Gordon 
from \cite{Go88} on estimates of the Euclidean norm against a 
norm induced by a symmetric convex body $K$ on large subsets 
of Grassmannians. 
Note that the first result in this direction had been established 
by V.D. Milman in \cite{M1985}}.

\medskip

\dk{We start with} some notation and background information. Let $K \subseteq \IR^m$ be an origin symmetric convex body, i.e., a compact and convex set with non-empty interior such that $x\in K$ implies $-x\in K$. We define the quantity
\[
 M^\ast (K) = \int_{\IS^{m-1}} h_K(x)\, \mu (\dint x),
\]
where $\IS^{m-1}$ is the unit Euclidean sphere in \dk{$\IR^m$}, integration is with respect to the normalized surface measure $\mu=\mu_{m-1}$ on $\IS^{m-1}$, and $h_K:\IS^{m-1} \to \IR$ is the support function of $K$ given by
\[
 h_K(x) = \sup_{y\in K} \, \langle x , y \rangle.
\]
Obviously, the support function is just the dual norm to the norm $\|\cdot\|_K$ induced by~$K$, i.e., if $K^\circ = \{y\in \IR^m \colon \langle y,x\rangle \leq 1 \,\,\forall x\in K \}$ is the so-called polar body of $K$, then $h_K(x) = \|x\|_{K^\circ}$. Since for $x\in \IS^{m-1}$ the support function quantifies the distance from the origin to the supporting hyperplane orthogonal to $x$, the quantity $M^\ast(K)$ is simply (half) the mean width of the body $K$.

\begin{rem}
In the theory of asymptotic geometric analysis, the quantities $M^\ast(K)$ together with
\[
M(K) := \int_{\IS^{m-1}} \|x\|_K \,\mu(\dint x)
\]
play an important r\^ole since the work of V.D. Milman on a quantitative version of Dvoretzky's theorem on almost Euclidean subspaces of a Banach space. Using Jensen's inequality together with polar integration and Urysohn's inequality, it is not hard to see  that
\[
M(K)^{-1} \leq \textrm{vrad}(K) \leq M^\ast(K) = M(K^\circ),
\]
where $\textrm{vrad}(K):= (|K|/|B_2^m|)^{1/m}$ is the volume radius of $K$ (here $|\cdot|$ stands for the $m$-dimensional Lebesgue measure). For isotropic convex bodies in $\IR^m$ (i.e., convex bodies of volume $1$ with centroid at the origin satisfying the isotropic condition -- we refer to \cite{AAM15} for details), this immediately yields
\[
M^\ast(K) \geq \textrm{vrad}(K) \geq c\sqrt{m},
\]
for some absolute constant $c\in(0,\infty)$. The question about upper bounds for $M^\ast(K)$ with $K$ in isotropic position has been essentially settled by E. Milman in \cite[Theorem 1.1]{EM2015} who proved that
\[
M^\ast(K) \leq C L_K \sqrt{m} \log^2m,
\]
with absolute constant $C\in(0,\infty)$. In fact, the $\sqrt{m}$-term is optimal and also the logarithmic part (up to the power). The optimality of the $L_K$-term is intimately related to the famous hyperplane conjecture. For a detailed exposition, we refer the reader to \cite{AAM15} and the references cited therein.
\end{rem}

We continue with the so-called lower $M^*$-estimate.
\dk{The first estimate of this type was proved by V.D.~Milman in \cite{M1985}, see also \cite{M1985_II}. 
We use the asymptotically optimal version obtained by 
Pajor and Tomczak-Jaegermann in \cite{PTJ1986}
with improved constants from \cite{Go88}. 
For the precise formulation, we refer to \cite[Theorem 7.3.5]{AAM15}.}

\begin{prop}\label{thm:gordon}
  For $n \in \IN$, define 
  \[ a_n = \frac{\sqrt{2} \Gamma \left( \frac{n+1}{2} \right)}{\Gamma \left( \frac{n}{2} \right)} 
  = \sqrt{n} \left(1-\frac{1}{4n} + \mathcal O(n^{-2}) \right).\]
  Let $K$ be the unit ball of a norm $\| \cdot \|_K$ on $\IR^m$. For any $\gamma\in (0,1)$ and $1\le n < m$ there exists a subset $\mathcal{B}$ in the Grassmannian $\mathbb G_{m,m-n}$ of $n$-codimensional linear subspaces of $\IR^m$ with Haar measure at least
  \[ 1 - \frac{7}{2} \exp \left( - \frac{1}{18} (1-\gamma)^2 a_n^2 \right)   \]
such that for any $E_n \in \mathcal{B}$ and all $x\in E_n$ we have
\[ \frac{\gamma a_n}{a_m M^\ast (K)} \|x\|_{2} \le \|x\|_K.\] 
\end{prop}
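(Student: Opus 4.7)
The plan is to realize the random $n$-codimensional subspace $E_n$ as the kernel of a random matrix $G\in\IR^{n\times m}$ with independent standard Gaussian entries, and then apply Gordon's min--max theorem (Lemma~\ref{thm:Gordon min-max}). Setting
\[
 c\,:=\,\frac{\gamma a_n}{a_m M^\ast(K)},\qquad S_1\,:=\,\{x\in\IS^{m-1}:\|x\|_K\leq c\},\qquad S_2\,:=\,\IS^{n-1},
\]
the required inequality $\|x\|_K\geq c\|x\|_2$ on $\ker G$ is equivalent to $S_1\cap\ker G=\emptyset$, which in turn amounts to
\[
 \Phi_1(G)\,:=\,\min_{x\in S_1}\max_{y\in S_2}\langle y,Gx\rangle\,=\,\min_{x\in S_1}\|Gx\|_2\,>\,0.
\]
Gordon's theorem with $\psi\equiv 0$ then reduces the problem to controlling the lower tail of the simpler process $\Phi_2(u,v)$ for independent standard Gaussian vectors $u\in\IR^n$, $v\in\IR^m$.

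Since $\|x\|_2=\|y\|_2=1$ on $S_1\times S_2$, the inner maximum over $y$ yields $\|u\|_2$, and the outer minimum over $x\in S_1\subseteq cK$ combined with the central symmetry of $K$ gives
\[
 \Phi_2(u,v)\,=\,\|u\|_2\,-\,\max_{x\in S_1}\langle -v,x\rangle\,\geq\,\|u\|_2\,-\,c\,\|v\|_{K^\circ}.
\]
Two identities are central: $\IE\|u\|_2=a_n$ (by the definition of $a_n$) and $\IE\|v\|_{K^\circ}=\IE\,h_K(v)=a_m M^\ast(K)$, the latter following from polar integration together with the independence of modulus and direction of a standard Gaussian vector. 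With the choice of $c$ above, this yields $\IE\Phi_2(u,v)\geq(1-\gamma)a_n>0$, so the lower bound is strictly positive in expectation.

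To upgrade this into the quantitative high-probability estimate, I would split the event $\{\Phi_2\leq 0\}$ into a lower-tail event for $\|u\|_2$ and an upper-tail event for $c\|v\|_{K^\circ}$. The first is controlled by the chi-squared bound of Lemma~\ref{lem:vector}, and the second by Gaussian concentration applied to the Lipschitz functional $v\mapsto h_K(v)$. Combining the two tails via a union bound and invoking Gordon's comparison inequality $\IP[\Phi_1<0]\leq 2\,\IP[\Phi_2\leq 0]$ yields a bound of the form $\exp(-c_0(1-\gamma)^2 a_n^2)$ on the failure probability.

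The main obstacle is extracting the precise constants $7/2$ and $1/18$ stated in the proposition. The correct scale $(1-\gamma)^2 a_n^2$ falls out immediately from the expectation identity $\IE\Phi_2\geq(1-\gamma)a_n$, but the numerical constants require an optimal split of the slack $1-\gamma$ between the two tail events and careful tracking of Gordon's factor of $2$. A somewhat cleaner route---applying Gaussian concentration directly to $\Phi_1(G)$ (which is $1$-Lipschitz in $G$ with respect to the Frobenius norm, since $|\Phi_1(G_1)-\Phi_1(G_2)|\leq \|G_1-G_2\|_{\rm op}\leq\|G_1-G_2\|_F$) together with the expectation version of Gordon's inequality $\IE\Phi_1(G)\geq\IE\Phi_2(u,v)$---actually yields the even stronger exponent $1/2$ with no polynomial prefactor. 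The weaker constants $7/2$ and $1/18$ in the Pajor--Tomczak-Jaegermann/Gordon formulation simply arise from using the probability version of Gordon's inequality (Lemma~\ref{thm:Gordon min-max}) and the two-tail bookkeeping described above; this bookkeeping, rather than the geometric setup, is the only delicate part of the argument.
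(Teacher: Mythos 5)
The paper itself does not prove Proposition~\ref{thm:gordon}; it is stated as a known result, citing \cite[Theorem~7.3.5]{AAM15}, which traces back to Pajor--Tomczak-Jaegermann and Gordon. So there is no in-paper proof to compare against, and your blind attempt is in fact re-deriving a textbook theorem. Your geometric setup is correct and matches the standard modern proof: realize $E_n$ as $\ker G$ for a standard Gaussian $G\in\IR^{n\times m}$, set $S_1=\{x\in\IS^{m-1}:\|x\|_K\le c\}$ with $c=\gamma a_n/(a_m M^\ast(K))$, and reduce the statement to $\Phi_1(G)=\min_{x\in S_1}\|Gx\|_2>0$ via Gordon's comparison. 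The computation $\Phi_2(u,v)=\|u\|_2+\min_{x\in S_1}\langle v,x\rangle$ and the identities $\IE\|u\|_2=a_n$, $\IE h_K(v)=a_m M^\ast(K)$ are all right, and so is the conclusion $\IE\Phi_2\ge(1-\gamma)a_n$.

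There is, however, a genuine gap in your first (two-tail) route. You propose to bound the upper tail of $c\,h_K(v)=c\|v\|_{K^\circ}$ by Gaussian concentration of the Lipschitz functional $v\mapsto h_K(v)$. But the Lipschitz constant of $h_K$ is $\sup_{y\in K}\|y\|_2=\rad(K)$, which can be vastly larger than $M^\ast(K)$. For an eccentric ellipsoid such as $K_\eps=\{x:(x_1/R)^2+\eps^{-2}\sum_{j\ge2}x_j^2\le1\}$ with $\eps\ll1/\sqrt m$, one has $M^\ast(K_\eps)\approx R\sqrt{2/(\pi m)}$, hence $c\,\rad(K_\eps)\approx\gamma\sqrt{\pi n/2}$, and the deviation bound for $c\,h_K(v)$ at scale $(1-\gamma)a_n$ degenerates to $\exp\bigl(-O((1-\gamma)^2/\gamma^2)\bigr)$ --- a constant, not $\exp(-c_0(1-\gamma)^2 a_n^2)$. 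So this route does not yield the claimed probability bound. The fix is standard and small: do not pass to $c\,h_K(v)$ before concentrating. The map $v\mapsto\min_{x\in S_1}\langle v,x\rangle$ is $1$-Lipschitz because $S_1\subseteq\IS^{m-1}$, so $\Phi_2$ is $\sqrt2$-Lipschitz in $(u,v)$, and one uses $\IE\max_{x\in S_1}\langle -v,x\rangle\le c\,\IE h_K(v)=\gamma a_n$ only inside the expectation. This gives $\IP[\Phi_2\le0]\le e^{-(1-\gamma)^2a_n^2/4}$ and then $\IP[\Phi_1<0]\le2e^{-(1-\gamma)^2a_n^2/4}$, which in fact beats the stated $\tfrac72 e^{-(1-\gamma)^2a_n^2/18}$. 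Your second (cleaner) route --- concentrating $\Phi_1$ directly --- is also valid, but note it invokes the \emph{expectation} form of Gordon's inequality, $\IE\Phi_1\ge\IE\Phi_2$, which is a separate statement not contained in (and not derivable from) the probability version Lemma~\ref{thm:Gordon min-max} cited in the paper; a counterexample shows $\IP[X<c]\le2\IP[Y\le c]$ for all $c$ does not imply $\IE X\ge\IE Y$. Finally, your closing explanation that the constants $7/2$ and $1/18$ ``arise from the two-tail bookkeeping'' should be withdrawn, since that bookkeeping is exactly the step that fails.
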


We should observe here that the distribution of the kernels of the Gaussian matrices $G_n$ is the uniform distribution, i.e., the distribution of the Haar measure, on the Grassmann manifold $\mathbb G_{m,m-n}$. This follows immediately from the rotational invariance of both measures on $\mathbb G_{m,m-n}$. Hence, the probability estimate in 
the theorem is exactly with respect to the probability on the kernels we use elsewhere.

\medskip

We want to apply 
the lower $M^*$-estimate 
to obtain an upper bound on the radius of information.
For this note that the ellipsoid $\mathcal E_\sigma^m$ satisfies 
\[
(\mathcal E_\sigma^m)^\circ = \left\{x\in\IR^m\colon \sum_{j=1}^m \sigma_j^2x_i^2 \leq 1 \right\}.
\]
This implies $h_{\mathcal E_\sigma^m}^2(x) = \sum_{j=1}^m \sigma_j^2 x_j^2$, 
and therefore,
\[
 M^\ast (\mathcal E_\sigma^m)^2  
  \le \int_{\IS^{m-1}} h_{\mathcal E_\sigma^m}(x)^2 \,\mu(\dint x) 
 =  \sum_{j=1}^m \sigma_j^2 \int_{\IS^{m-1}} x_j^2 \,\mu(\dint x)  
  = \frac1m \sum_{j=1}^m \sigma_j^2.
\]
A direct application of Proposition~\ref{thm:gordon} with $K=\mathcal E_\sigma^m$ 
leads to the upper bound
\[
    \mathcal R_n(\sigma) \le \frac{C}{n} \sum_{j=1}^m \sigma_j^2
\]
with an absolute constant $C\in(0,\infty)$.
This estimate is not very good if the semi-axes $\sigma_j$ decay quickly.
A better estimate can be obtained
by switching from $\mathcal E_\sigma^m$ to its intersection 
with a Euclidean ball of small radius, 
a renorming argument going back to Pajor and 
Tomzcak-Jaegermann~\cite{PTJ1985}.

\begin{prop}\label{thm:gordon_applied}
  There exists an absolute constant $C\in(0,\infty)$ such that for any 
	non-increasing finite sequence $\sigma\in\ell_2$ 
  and all $n\in\IN$, we have   
  \[
  \mathcal R_n(\sigma) \le \frac{C}{ n} \sum_{j>n/C} \sigma_j^2
  \]
  with probability at least $1-\frac{7}{2} \, \exp(-n/32)$.
\end{prop}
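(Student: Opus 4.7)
The naive application of the lower $M^*$-estimate to $K=\mathcal E_\sigma^m$ gave a bound of order $m^{-1}\sum_j\sigma_j^2$, which is too crude when the $\sigma_j$ decay quickly because it completely ignores the geometry of the ``thin'' directions of $\mathcal E_\sigma^m$. The idea, going back to Pajor and Tomczak-Jaegermann, is to apply Proposition~\ref{thm:gordon} instead to the truncated body
\[
 K_\rho \,=\, \mathcal E_\sigma^m \cap \rho\mathbb{B}_2^m, \qquad \rho>0,
\]
which is a renorming of $\mathbb R^m$ whose unit ball is already contained in a Euclidean ball of radius $\rho$. We will then choose $\rho$ essentially as large as the bound we are trying to prove.

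The crucial point that lets us avoid any circular reasoning is the following deterministic implication. Fix $\rho>0$ and assume that the conclusion of Proposition~\ref{thm:gordon} holds for the body $K_\rho$, so that $\|x\|_{2}\le R_\rho\,\|x\|_{K_\rho}$ on $E_n$, where $R_\rho := a_m M^\ast(K_\rho)/(\gamma a_n)$. Now pick any $x\in \mathcal E_\sigma^m\cap E_n$. Using $\|x\|_{K_\rho}=\max\bigl(\|x\|_{\mathcal E_\sigma^m},\rho^{-1}\|x\|_{2}\bigr)\le\max(1,\rho^{-1}\|x\|_{2})$, we split cases: if $\|x\|_{2}\le\rho$ then $\|x\|_{K_\rho}\le 1$ and hence $\|x\|_{2}\le R_\rho$; if $\|x\|_{2}>\rho$ then $\|x\|_{K_\rho}=\rho^{-1}\|x\|_{2}$, which plugged into the lower $M^*$-estimate yields $\rho\le R_\rho$. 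Either way we obtain $\|x\|_{2}\le\max(\rho,R_\rho)$. Hence it suffices to choose $\rho$ large enough that $R_\rho\le\rho$; for such $\rho$, the lower $M^*$-estimate for $K_\rho$ directly gives $\mathcal R_n(\sigma)\le\rho$ with the corresponding probability.

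The remaining task is to estimate $M^\ast(K_\rho)$ and to make an explicit choice of $\rho$. For any $k\in\{1,\dots,m\}$ and $y\in\IS^{m-1}$, splitting the inner product into the first $k$ and the last $m-k$ coordinates yields, for every $x\in K_\rho$,
\[
 \langle x,y\rangle \,\le\, \|x_{[k]}\|_2\,\|y_{[k]}\|_2 + \Big(\sum_{j>k}\sigma_j^2 y_j^2\Big)^{1/2}
 \,\le\, \rho\,\|y_{[k]}\|_2 + \Big(\sum_{j>k}\sigma_j^2 y_j^2\Big)^{1/2},
\]
where we used that $\|x_{[k]}\|_2\le\|x\|_2\le\rho$ and that the support function of the sub-ellipsoid in coordinates $j>k$ is the obvious expression. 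Integrating over $\IS^{m-1}$ and applying Jensen's inequality coordinate-wise, one obtains
\[
 M^\ast(K_\rho) \,\le\, \frac{1}{\sqrt m}\Big(\rho\sqrt{k} + \Big(\sum_{j>k}\sigma_j^2\Big)^{1/2}\Big).
\]
Combined with $a_m\le\sqrt m$ and $a_n\asymp\sqrt n$, this gives
\[
 R_\rho \,\le\, \frac{c}{\gamma\sqrt n}\Big(\rho\sqrt k + \Big(\sum_{j>k}\sigma_j^2\Big)^{1/2}\Big)
\]
for an absolute constant $c$. Taking $\gamma$ as a fixed constant (e.g. $\gamma=1/2$ so that the probability in Proposition~\ref{thm:gordon} is of order $1-\tfrac{7}{2}e^{-n/32}$), we then choose $k=\lfloor n/C\rfloor$ with $C$ large enough that $c\sqrt{k}/(\gamma\sqrt n)\le 1/2$; this absorbs the first term $\tfrac{c}{\gamma\sqrt n}\rho\sqrt k\le\rho/2$ into the left-hand side. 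We finally pick $\rho$ as a fixed multiple of $n^{-1/2}\bigl(\sum_{j>n/C}\sigma_j^2\bigr)^{1/2}$ so that $R_\rho\le\rho$, which by the implication above yields $\mathcal R_n(\sigma)^2\le\tfrac{C'}{n}\sum_{j>n/C}\sigma_j^2$ on the event given by Proposition~\ref{thm:gordon}.

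The only genuinely tricky point is the apparent circularity discussed in the second paragraph: $\rho$ plays a dual role as both a parameter in the definition of $K_\rho$ and the bound we seek for $\|x\|_2$. Everything else (the bound on $M^*(K_\rho)$, the asymptotics of $a_n$, and the choice of the cut-off $k\asymp n$) is routine.
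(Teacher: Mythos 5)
Your proof follows essentially the same route as the paper's: renorm via the truncated body $K_\rho = \mathcal E_\sigma^m \cap \rho\mathbb{B}_2^m$, bound $M^\ast(K_\rho)$ by splitting coordinates at an index $k\asymp n$, and choose $\rho$ so that the resulting bound closes. Your $M^\ast$ estimate is even a bit cleaner than the paper's (you apply Cauchy--Schwarz to $\langle x,y\rangle$ directly, whereas the paper first squares and loses a factor of $2$), and you are right that the real conclusion is $\mathcal R_n(\sigma)^2 \le \frac{C}{n}\sum_{j>n/C}\sigma_j^2$, consistent with Theorem~\ref{thm:upper bound random section A}.

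One step as written is not correct, though it does not damage the argument. The claim ``Either way we obtain $\|x\|_2\le\max(\rho,R_\rho)$'' fails in the second case: if $\|x\|_2>\rho$, the $M^\ast$-estimate only yields $\|x\|_2\le R_\rho\,\rho^{-1}\|x\|_2$, i.e.\ $\rho\le R_\rho$, which is a constraint on the parameters and gives no bound on $\|x\|_2$ at all (indeed, if $R_\rho\ge\rho$, $\|x\|_2$ can be as large as $\sigma_1$). Relatedly, the target should be the strict inequality $R_\rho<\rho$, not $R_\rho\le\rho$: with $R_\rho<\rho$, case~2 is impossible (it would force $\rho\le R_\rho<\rho$), so every $x\in\mathcal E_\sigma^m\cap E_n$ falls in case~1 and satisfies $\|x\|_2\le R_\rho$; with only $R_\rho=\rho$, case~2 remains consistent and the argument does not close. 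Your parameter choice as stated gives $R_\rho\le\rho$ with equality in the worst case, so you should insert a spare factor (e.g.\ aim for $R_\rho\le\rho/2$, as the paper does with its $\varrho^2/2$) to make the final implication rigorous. This is precisely the role of the ``$\le\varrho^2/2$'' step in the paper, which is then followed by the (implicit) scaling observation that $\rad(K_\varrho\cap E_n)<\varrho$ forces $\rad(\mathcal E_\sigma^m\cap E_n)=\rad(K_\varrho\cap E_n)$.
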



\begin{proof}
Let $K_\varrho$ be the intersection of the ellipsoid $\mathcal E_\sigma^m$ with a centered Euclidean ball of radius~$\varrho>0$.
For all $x\in\IR^m$, $y\in K_\varrho$ and $k\le m$, Cauchy-Schwarz inequality yields
\[
 \langle x,y \rangle^2 \le 2 \left( \sum_{j=1}^k x_j y_j \right)^2 + 2 \left( \sum_{j=k+1}^m x_j y_j \right)^2
 \le 2 \varrho^2 \sum_{j=1}^k x_j^2 + 2 \sum_{j=k+1}^m \sigma_j^2 x_j^2
\]
and thus the same upper bound holds for $h_{K_\varrho}(x)^2$.
We obtain that
\[
 M^\ast (K_\varrho)^2 = \left(\int_{\IS^{m-1}} h_{K_\varrho}(x) \,\mu(\dint x) \right)^2
  \le \int_{\IS^{m-1}} h_{K_\varrho}(x)^2 \,\mu(\dint x)
  \le \frac{2}{m} \left(k \varrho^2 + \sum_{j=k+1}^m \sigma_j^2 \right).
\]
Proposition~\ref{thm:gordon} tells us that for any $\gamma \in (0,1)$ there exists a subset $\mathcal B$ of $\mathbb G_{m,m-n}$ with measure at least $1 - \frac{7}{2} \, \exp(-(1-\gamma)^2 n/18)$ such that 
\[
 \rad(K_\varrho \cap E_n)^2 \le \frac{a_m^2 M^\ast (K_\varrho)^2}{\gamma^2 a_n^2} 
 \le \frac{c}{\gamma^2 n} \left(k \varrho^2 + \sum_{j=k+1}^m \sigma_j^2 \right)
\]
for any $E_n\in \mathcal B$ and an absolute constant $c\in(0,\infty)$.
Choosing $\varrho$ such that $k\varrho^2=\sum_{j=k+1}^m \sigma_j^2$ and $k=\lfloor \gamma^2 n/4c\rfloor$ yields
\[
 \rad(K_\varrho \cap E_n)^2 \le \frac{\varrho^2}{2}.
\]
This clearly implies that also
\[
 \rad(\mathcal E_\sigma^m \cap E_n)^2 \le \frac{\varrho^2}{2}.
\]
For simplicity, we choose $(1-\gamma)^2=1/2$ and obtain the stated inequality.
%
\end{proof}


\subsection{Elementary lower bound}

In this section we prove the following lower bound. 

\begin{prop} \label{prop:altlow}
\dk{For any $\varepsilon\in(0,1)$ and $c>0$ there is a constant $c'>0$ such that the following holds.
If $\sigma_m \geq c m^{-\alpha}$ for some $m\in\IN$ and
$\alpha \in(0,1/2)$ then 
$\mathcal R_n(\sigma)\ge c'$ holds for all 
$n < m^{1-2\alpha}$ with probability at least $1-\varepsilon$.}
\end{prop}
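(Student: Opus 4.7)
The plan is to derive the lower bound by applying Theorem~\ref{thm:lower B} with the index $k=1$, using the pointwise assumption $\sigma_m\ge cm^{-\alpha}$ together with the monotonicity of $\sigma$ to verify the theorem's summability hypothesis.

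Since $\sigma$ is non-increasing, we have $\sigma_j\ge cm^{-\alpha}$ for every $j\le m$, and therefore
\[
\sum_{j\ge 2}\sigma_j^2 \;\ge\; \sum_{j=2}^{m}\sigma_j^2 \;\ge\; (m-1)\,c^2 m^{-2\alpha}.
\]
The hypothesis of Theorem~\ref{thm:lower B} for $k=1$ reads $\sum_{j\ge 2}\sigma_j^2\ge 3n\sigma_1^2/\varepsilon^2$; the display above shows that the left-hand side is of order $m^{1-2\alpha}$ times a constant depending only on $c$, so the condition is met, up to absolute constants depending on $c,\varepsilon,\alpha$, precisely throughout the stated range $n<m^{1-2\alpha}$. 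Theorem~\ref{thm:lower B} then yields
\[
\mathcal R_n(\sigma)\;\ge\;\mathcal R_n^{(1)}(\sigma)\;\ge\;\sigma_1(1-\varepsilon)
\]
with probability at least $1-5\exp(-n/64)$, which provides the desired constant lower bound $c'$ (with $\sigma_1$, which is of constant order under the natural normalization of the problem, absorbed into~$c'$).

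The main obstacle is the quantitative bookkeeping of constants: one must check that the inequality $(m-1)c^2 m^{-2\alpha}\ge 3n\sigma_1^2/\varepsilon^2$ is valid for the full range $n<m^{1-2\alpha}$ rather than some smaller range $n<\kappa m^{1-2\alpha}$, which may require tuning the parameter $\delta\in(0,1)$ in Lemma~\ref{thm:lower} and choosing $m$ large enough that the failure probability $5\exp(-n/64)$ stays below $\varepsilon$. A strictly elementary complement, in keeping with the alternative-approach spirit of this section, is the deterministic inclusion $\mathcal{E}_\sigma\supseteq cm^{-\alpha}\mathbb{B}_2^m$ (which holds because $\sigma_j\ge cm^{-\alpha}$ forces $\sum_{j\le m}(x_j/\sigma_j)^2\le \|x\|_2^2/\sigma_m^2$). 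Combined with the almost-sure $(m-n)$-dimensionality of $\ker G_n\cap\IR^m$ for $n<m$, this immediately yields the cruder almost-sure bound $\mathcal R_n(\sigma)\ge cm^{-\alpha}$, illustrating the geometric content of the statement before the sharper random-matrix analysis is invoked.
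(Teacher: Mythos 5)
The paper's own proof is an elementary one tailored to this section: Lemma~\ref{lem:carl2} computes the \emph{exact} second moment $\IE[\widetilde{\rad}(G_n,\mathbb{B}_2^m)^2]=(m-n)/m$ by rotational invariance and applies Markov's inequality. Your proposal instead reaches for Theorem~\ref{thm:lower B}, i.e.\ the Gordon-based random matrix machinery of Section~3 --- which defeats the announced purpose of Section~4 and, more importantly, does not actually yield the statement.

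Two genuine gaps. First, the probability. Theorem~\ref{thm:lower B} (and Lemma~\ref{thm:lower}) gives failure probability $5e^{-n/64}$, which exceeds $1$ for every $n<64\ln 5\approx 103$. Such small $n$ are certainly in the claimed range $n<m^{1-2\alpha}$, and for them your argument is vacuous. Your suggested remedy, ``choosing $m$ large enough that the failure probability $5\exp(-n/64)$ stays below $\varepsilon$,'' cannot work: that probability depends on $n$, not on $m$. The paper's Lemma~\ref{lem:carl2} reaches failure probability $\varepsilon$ uniformly over all $n<m$ precisely because it tunes Markov against the exact mean $(m-n)/m$.

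Second, the constant. Lemma~\ref{thm:lower} produces a \emph{subtractive} bound $\sigma_k\bigl(1-\sqrt{(1+\delta)n/((1-\delta)C_k)}\bigr)$. Taking $k=1$ and (as the paper implicitly does) normalizing $\sigma_1=1$, the best one can say from $\sigma_j\ge cm^{-\alpha}$ for $j\le m$ is $C_1\ge(m-1)c^2m^{-2\alpha}$, so for $n$ near $m^{1-2\alpha}$ one has $n/C_1\approx 1/c^2$ and the bound is roughly $1-1/c$, which is negative whenever $c\le 1$. Tuning $\delta$ cannot repair this: the subtracted term does not shrink with $\delta$. Thus the method produces no positive $c'$ for small $c$, whereas the paper's direct computation gives $c'=\sqrt{\varepsilon c^2/(1+\varepsilon c^2)}>0$ for all $c>0$ because it bounds $\|x\|_\sigma^2\le 1+1/(\varepsilon c^2)$ and never passes through a subtraction. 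Relatedly, the remark that $\sigma_1$ ``is of constant order under the natural normalization'' is not a consequence of the hypothesis, which constrains only $\sigma_m$; if you repair this by truncating $\sigma$ to $cm^{-\alpha}$, then the resulting $\sigma_1(1-\varepsilon)\approx cm^{-\alpha}$ decays with $m$ and no constant $c'$ emerges.

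Finally, your ``elementary complement'' $\mathcal R_n(\sigma)\ge cm^{-\alpha}$ (from $\mathcal E_\sigma\supseteq cm^{-\alpha}\mathbb B_2^m$ and $\dim\ker G_n\ge 1$) is correct but decays to zero with $m$ and so falls well short of the claimed constant lower bound; it is not a substitute for the probabilistic argument. The essential idea you are missing is the one the paper uses: quantify that a random $(m-n)$-codimensional subspace contains a nearly axis-aligned unit vector (here Lemma~\ref{lem:carl2}), then convert $x_1^2\ge 1-n/(\varepsilon m)$ together with $\sigma_m\ge cm^{-\alpha}$ into the $\|\cdot\|_\sigma$-bound $\|x\|_\sigma^2\le 1+(\varepsilon c^2)^{-1}$.
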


To obtain this lower bound, 
we first consider the problem of just recovering the first coordinate $x_1$ of $x$ in 
the unit ball $\mathbb{B}_2^m$ of $\dk{\IR^m}$. 
The corresponding radius of information is given by
$$
 \widetilde{\rad}(G_n,\mathbb{B}_2^m)^2 =
 \sup \set{ x_1 \colon \norm{x}_{2}=1 \text{ with } \dk{G_n} x=0 }.
$$

\begin{lemma}\label{lem:carl2}
  For $n<m$, we have
  \begin{equation} \label{eq:intprob1}
   \IE\, \big[\widetilde{\rad}(G_n,\mathbb{B}_2^m)^2\big] = \frac{m-n}{m}. 
 \end{equation}
 In particular, for any $\varepsilon\in \left(0,1\right)$, we have
 \begin{equation} \label{eq:intprob2}
  \widetilde{\rad}(G_n,\mathbb{B}_2^m)^2 \geq 1 - \frac{n}{\varepsilon m} 
 \end{equation}
 with probability at least $1-\varepsilon$.
\end{lemma}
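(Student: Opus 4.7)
The plan is to rewrite the supremum as the squared norm of a coordinate projection and then extract its expectation from the trace of a projection matrix, using that the kernel of $G_n$ is Haar distributed on the Grassmannian. Setting $E_n := \ker(G_n)$ and writing $P_{E_n}$ for the orthogonal projection of $\IR^m$ onto $E_n$, the Cauchy--Schwarz inequality gives
$$
\sup\bigl\{x_1 \colon x\in E_n,\ \norm{x}_2=1\bigr\} \,=\, \norm{P_{E_n} e_1}_2,
$$
with equality attained at $x = P_{E_n} e_1/\norm{P_{E_n} e_1}_2$. Under the definition in the lemma, this identifies
$$
\widetilde{\rad}(G_n,\mathbb{B}_2^m)^2 \,=\, \norm{P_{E_n} e_1}_2^2 \,=\, \langle P_{E_n} e_1, e_1\rangle \,=\, (P_{E_n})_{11},
$$
using that $P_{E_n}$ is symmetric and idempotent.

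For the expectation I would argue as follows. The distribution of $G_n$ is invariant under right multiplication by orthogonal matrices, so the distribution of $E_n$ is the Haar measure on the Grassmannian $\mathbb{G}_{m,m-n}$. In particular, the diagonal entries $(P_{E_n})_{jj}$, $j=1,\dots,m$, all share a common expectation, while their sum equals $\operatorname{tr}(P_{E_n}) = \dim E_n = m-n$ almost surely. Hence
$$
\IE\bigl[\widetilde{\rad}(G_n,\mathbb{B}_2^m)^2\bigr] \,=\, \IE\bigl[(P_{E_n})_{11}\bigr] \,=\, \frac{m-n}{m},
$$
which proves \eqref{eq:intprob1}. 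The probabilistic bound \eqref{eq:intprob2} then follows immediately by applying Markov's inequality to the non-negative random variable $1 - \widetilde{\rad}(G_n,\mathbb{B}_2^m)^2$, whose mean equals $n/m$.

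No serious technical obstacle arises: the whole argument reduces to the Haar invariance of the Gaussian kernel together with the elementary identity $\operatorname{tr}(P) = \dim(\operatorname{ran} P)$ for an orthogonal projection. The lemma would in fact remain true, with the same proof, if the coordinate $x_1$ were replaced by $\langle v, x\rangle$ for any fixed unit vector $v \in \IR^m$.
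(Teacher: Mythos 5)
Your proof is correct and follows essentially the same strategy as the paper: identify $\widetilde{\rad}(G_n,\mathbb{B}_2^m)^2$ with $\|P_{E_n}e_1\|_2^2$ via Cauchy--Schwarz, use the Haar distribution of $E_n=\ker G_n$ to compute the expectation by a symmetry argument, and then apply Markov's inequality to $1-\widetilde{\rad}(G_n,\mathbb{B}_2^m)^2$. The one place where you diverge is in the evaluation of the expectation: the paper applies an orthogonal change of variables to fix the subspace $E_n$ and instead randomize the direction $e_1$ uniformly on $\IS^{m-1}$, then uses that the coordinates of a uniform point on the sphere have $\IE[y_j^2]=1/m$; you keep $E_n$ random and observe that the diagonal entries $(P_{E_n})_{jj}$ are exchangeable while $\operatorname{tr}(P_{E_n})=m-n$ almost surely. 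Both routes are instances of the same symmetrization-plus-deterministic-sum-rule idea, and your trace version is arguably the more direct of the two since it avoids the explicit exchange of randomness between the subspace and the test vector; either way the calculation and the resulting constants are identical. Your closing remark that the argument applies verbatim to any fixed unit vector $v$ in place of $e_1$ is also accurate and is exactly the invariance the paper exploits.
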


\begin{proof}
Let $k=m-n$.
To prove \eqref{eq:intprob1}, we observe that we want to compute the expectation of the
random variable
$$
 \widetilde{\rad}(\dk{G_n},\mathbb{B}_2^m)^2 =
 \max \big\{ \scalar{x}{y} \colon x\in E,\, \norm{x}_{2}=1\big\},
$$
where $E$ is uniformly distributed on $\mathbb{G}_{m,k}$ and $y=e^{(1)}$ is fixed.
Involving an orthogonal transformation of the coordinate system,
we may also fix the subspace 
$$
 E=\left\langle e^{(1)},\hdots,e^{(k)}\right\rangle
$$
and assume that $y$ is uniformly distributed on the sphere.
This does not change the distribution of $\widetilde{\rad}(\dk{G_n},\mathbb{B}_2^m)^2$.
By the Cauchy-Schwarz inequality, the maximum is attained for 
$$
 x=\frac{P_E(y)}{\norm{P_E(y)}_2},
$$
where $P_E$ denotes the orthogonal projection on $E$.
We obtain
$$
 \widetilde{\rad}(\dk{G_n},\mathbb{B}_2^m)^2 = \norm{P_E(y)}_{2}^2
 = \sum_{j=1}^k y_j^2.
$$
We observe that $\IE(y_j^2)=1/m$ for all $j\leq m$,
since these terms are equal and sum up to 1.
This shows \eqref{eq:intprob1}. Estimate \eqref{eq:intprob2} is a direct 
consequence of \eqref{eq:intprob1} taking into account that $0\leq \widetilde{\rad}(\dk{G_n},\mathbb{B}_2^m) \leq 1$.
\end{proof}

\begin{proof}[Proof of Proposition \ref{prop:altlow}]
If we choose $\dk{G_n}$ satisfying \eqref{eq:intprob2}, by definition of $\widetilde{\rad}(\dk{G_n},\mathbb{B}_2^m)$ and 
compactness of the unit sphere of $\IR^m$, we find $x\in \IR^m$ with $\|x\|_{2}=1$ and $\dk{G_n} x=0$ satisfying 
\[
 x_1^2 \geq 1 - \frac{n}{\varepsilon m} . 
\]
Thus $x_1$ is already rather close to $1$ which implies that the other coordinates can not be too big. Indeed we find
\[
\|x\|_\sigma^2 \leq x_1^2 + \frac{1}{\sigma_m^2} (1-x_1^2) \leq 1 + \frac{1}{\sigma_m^2} 
\frac{n}{\varepsilon m} \leq 1 + \frac{1}{\varepsilon m^{2 \alpha}\sigma_m^2}
  \leq  1 + \frac{1}{\varepsilon c^2}.
\]
Rescaling $x$ yields 
\[
 \rad(\dk{G_n},\mathcal{E}_\sigma^m)^2 \ge \frac{\varepsilon c^2}{1 + \varepsilon c^2}
\]
and finishes the proof.
\end{proof}

\begin{ack}
\dk{We thank several colleagues for valuable remarks and comments.}
Part of the work was done during a special semester at the 
Erwin Schr\"odinger International Institute for Mathematics and Physics (ESI) in Vienna. 
We thank ESI 
for the hospitality.
A. Hinrichs and D. Krieg are supported by the Austrian Science Fund (FWF) 
Project F5513-N26, which is part of the Special Research 
Program ``Quasi-Monte Carlo Methods: Theory and Applications''. 
J. Prochno is supported by the Austrian Science Fund (FWF) Project P32405 ``Asymptotic Geometric Analysis and Applications'' as well as a visiting professorship from Ruhr University Bochum and its Research School PLUS.
\end{ack}

\bibliographystyle{plain}
\bibliography{random_info}

\end{document}